\pdfoutput=1
\documentclass[10pt]{amsart}

\numberwithin{equation}{section}

\usepackage{amsmath,amsfonts,amssymb}
\usepackage{mathtools}
\usepackage[utf8]{inputenc}
\usepackage[english]{babel}
\usepackage{hyperref}
\usepackage{tikz-cd}

\DeclareUnicodeCharacter{FB01}{fi}
\newtheorem{theorem}{Theorem}[section]

\newtheorem{lemma}[theorem]{Lemma}
\newtheorem{prop}{Proposition}

\theoremstyle{definition}
\newtheorem{definition}{Definition}

\begin{document}
	\title[TORELLI THEOREM FOR SYMPLECTIC PARABOLIC HIGGS BUNDLES]
	{TORELLI THEOREM FOR THE MODULI SPACE OF SYMPLECTIC PARABOLIC HIGGS BUNDLES}
	
	\author{Sumit Roy}
	\address{School of Mathematics, Tata Institute of Fundamental Research, Homi Bhabha Road, Colaba, Mumbai 400005, India.}
	\email{sumit@math.tifr.res.in}
	\thanks{E-mail : sumit@math.tifr.res.in}
	\thanks{Address : School of Mathematics, Tata Institute of Fundamental Research, Homi Bhabha Road, Colaba, Mumbai 400005, India.}
	\subjclass[2010]{14D20, 14D22, 53D30, 14H60}
	\keywords{Symplectic parabolic Higgs bundles, Hitchin map, Spectral curve}
	
	\begin{abstract}
	 Let $(X,D)$ and $(X',D')$ be two  compact Riemann surfaces of genus $g \geq 4$ with the set of marked points $D \subset X$ and $D' \subset X'$. Fix a parabolic line bundle $L$ with trivial parabolic structure. Let $\mathcal{N}_{\textnormal{Sp}}(2m,\alpha,L)$ and $\mathcal{N}'_{\textnormal{Sp}}(2m,\alpha,L)$ be the moduli spaces of stable symplectic parabolic Higgs bundles over $X$ and $X'$ respectively, with rank $2m$ and fixed parabolic structure $\alpha$, with the symplectic form taking values in $L$. We prove that if $\mathcal{N}_{\textnormal{Sp}}(2m,\alpha,L)$ is isomorphic to $\mathcal{N}'_{\textnormal{Sp}}(2m,\alpha,L)$, then there exist an isomorphism between $X$ and $X'$ sending $D$ to $D'$. 	
	\end{abstract}
	\maketitle	
	\section{Introduction}
	Let $X$ and $X'$ be two compact connected Riemann surfaces of genus $g$. The classical Torelli theorem says that if their Jacobians $J(X)$ and $J(X')$ are isomorphic as polarized varieties, with the polarization given by the theta divisor, then $X$ is isomorphic to $X'$.\\
	In \cite{MN68} Mumford and Newstead proved a similar result for $\mathcal{M}_X^{2,\xi}$, the moduli space of stable vector bundles over $X$  with rank $2$ and fixed determinant $\xi$ (assuming $g\geq 2$). Later, in \cite{NR75} Narasimhan and Ramanan extended this result for any rank. They consider the intermediate Jacobian associated to $H^3(\mathcal{M}_X^{r,\xi})$. They showed that this has a polarization defined by the positive generator of $\textnormal{Pic}(\mathcal{M}_X^{r,\xi})$, and this polarized intermediate Jacobian is isomorphic (as a polarized variety) to the Jacobian of the curve $X$, with the polarization given by the theta divisor. So the result follows from the classical Torelli theorem. A Torelli theorem for the moduli space of symplectic bundles was proved in \cite{BH12} and \cite{BGM12}.\\
	The notion of parabolic bundles over a curve was described by Mehta and Seshadri in \cite{MS80}, where they also constructed its moduli space using Geometric Invariant Theory. In \cite{BR89} Bhosle and Ramanathan extended this notion to parabolic $G$-bundles where $G$ is a connected reductive group, and constructed their moduli space. The notion of symplectic parabolic bundles were described in \cite{BMW11}. They are parabolic vector bundles with a nondegenerate (in a suitable sense) anti-symmetric form taking values in a parabolic line bundle.\\
	Hitchin, in \cite{H87} showed that the moduli space of stable Higgs bundles over a curve $X$ forms an algebraically completely integrable system fibered, over a space of invariant polynomials, either by a Jacobian or a Prym variety of spectral curves. Later, in \cite{M94} Markman extended this result for the moduli space of stable $L$-twisted Higgs bundles where $L$ is a positive line bundle on $X$ satisfying $L \geq K_X$.\\
	In \cite{BBB01} Balaji, Baño and Biswas proved a Torelli theorem for parabolic bundles of rank $2$, and in \cite{BG03} Biswas and Gómez proved a Torelli theorem for Higgs bundles (with genus $g\geq 2$). Gómez and Logares in their paper \cite{GL11} proved a Torelli theorem for parabolic Higgs bundles of rank $2$ by applying the Torelli thoerem of \cite{BBB01}. In \cite{AG19}, Alfaya and Gómez proved a Torelli theorem for parabolic bundles of any rank (assuming $g \geq 4$). Our goal in this article is to prove a Torelli theorem for symplectic parabolic Higgs bundles.\\
	The main result is the following theorem (see section $2$ for notation):
	\begin{theorem}\label{thm2}
	 Let $(X,D)$ and $(X',D')$ be two  compact Riemann surfaces of genus $g \geq 4$ with the set of marked points $D \subset X$ and $D' \subset X'$.  Let $\mathcal{N}_{\textnormal{Sp}}(2m,\alpha,L)$ and $\mathcal{N}'_{\textnormal{Sp}}(2m,\alpha,L)$ be the moduli spaces of stable symplectic parabolic Higgs bundles over $X$ and $X'$, respectively. If $\mathcal{N}_{\textnormal{Sp}}(2m,\alpha,L)$ is isomorphic to $\mathcal{N}'_{\textnormal{Sp}}(2m,\alpha,L)$, then $(X,D)$ is isomorphic to $(X',D')$, i.e. there exist an isomorphism $X \cong X'$ sending $D$ to $D'$.  
	\end{theorem}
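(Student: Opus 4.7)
The plan is to exploit the Hitchin fibration on $\mathcal{N}_{\textnormal{Sp}}(2m,\alpha,L)$ and reduce to the recovery of $(X,D)$ from the Hitchin base together with its spectral data, following the overall strategy of \cite{BG03} and its parabolic extensions \cite{GL11, AG19}, combined with the symplectic techniques of \cite{BH12, BGM12}. Given an isomorphism $\Phi: \mathcal{N}_{\textnormal{Sp}}(2m,\alpha,L) \xrightarrow{\sim} \mathcal{N}'_{\textnormal{Sp}}(2m,\alpha,L)$, the first step is to show that $\Phi$ intertwines the two Hitchin fibrations. The standard way to do this is to characterize the Hitchin map intrinsically: either by identifying its zero fiber (the nilpotent cone) as the downward Bia\l ynicki-Birula stratum of the natural $\mathbb{C}^*$-action $(E,\varphi)\mapsto (E,t\varphi)$, or by recognizing its generic fibers as the maximal-dimensional abelian subvarieties through a generic point. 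Either characterization forces $\Phi$ to descend to an isomorphism of the symplectic parabolic Hitchin bases.

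Once the Hitchin bases are identified, the curve is extracted from the spectral data. The symplectic parabolic Hitchin base decomposes as $\bigoplus_{i=1}^{m} H^0(X, L^{2i}(\text{parabolic twist}))$ (using the extension of Markman's construction in \cite{M94} to the parabolic case), and the generic fiber is a Prym-type abelian variety of a spectral curve $\widetilde{X}_s \to X$, ramified over a discriminant locus in $X$ with controlled behaviour over $D$ dictated by $\alpha$. One can then recover $X$ either by reconstructing the spectral curve from the generic Prym via a Torelli-type argument for abelian varieties and descending through the Galois involution of the double cover, or, using the dimension and linear structure of the Hitchin base, by pinning down $H^0(X,L^{2})$ and hence the curve. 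The marked points $D$ are then recovered from the parabolic structure: the parabolic Hitchin base differs from the non-parabolic one precisely through twisting supported on $D$, so $D$ is cut out as a distinguished subset of $X$ by intrinsic data, and the induced isomorphism must send $D$ to $D'$.

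I expect the main obstacle to be the first step, the intrinsic characterization of the Hitchin map in the symplectic parabolic setting. The $L$-valued antisymmetric form imposes an involutive constraint $\varphi \sim -\varphi^{\vee}\otimes \mathrm{id}_L$ which alters the deformation complex of $\mathcal{N}_{\textnormal{Sp}}$ and restricts the Hitchin base to the even Casimirs $\mathrm{tr}(\varphi^{2i})$; one must verify that the $\mathbb{C}^*$-action, the identification of the nilpotent cone, and the maximal-abelian-subvariety characterization all transport correctly under $\Phi$ in the presence of this symmetry and of the parabolic data. In addition, both the quasi-parabolic filtration at $D$ and its symplectic compatibility must be shown to be encoded by the Hitchin fibration, and the genus bound $g \geq 4$ has to be tracked throughout any appeal to the \cite{AG19}-type reconstruction for the underlying parabolic bundle moduli space.
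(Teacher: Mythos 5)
Your overall architecture --- recover the Hitchin fibration intrinsically, then extract $(X,D)$ from the base and the spectral data --- matches the paper's, but both of the steps you yourself flag as delicate are left without a workable mechanism, and the alternatives you sketch for the second step would not go through. For the first step, the paper does not characterize the nilpotent cone via ``the natural $\mathbb{C}^*$-action'': an abstract isomorphism is not known to transport that action, which is precisely the difficulty. Instead it recovers the base as $\mathrm{Spec}(\Gamma(\mathcal{N}_{\textnormal{Sp}}(2m,\alpha,L)))\cong W$ (properness and connectedness of all fibers force every global function to factor through $h$), and then locates the origin by a Kodaira--Spencer argument (Proposition \ref{prop10}): for \emph{any} $\mathbb{C}^*$-action on $W$ with exactly one fixed point that lifts to $\mathcal{N}_{\textnormal{Sp}}(2m,\alpha,L)$, the tangent vector of its orbit through a point with smooth spectral curve lies in the one-dimensional kernel of the Kodaira--Spencer map $u: W \to H^1(X_s,T_{X_s})$ (the lift forces the Prym fibers, hence the spectral curves, to stay constant along the orbit), so the orbit coincides with a standard orbit and the unique fixed point must be the origin. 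This is the missing ingredient that makes the nilpotent cone, and hence (via the unique irreducible component admitting no nontrivial $\mathbb{C}^*$-action, Propositions \ref{prop7} and \ref{prop8}) the subvariety $\mathcal{M}_{\textnormal{Sp}}(2m,\alpha,L)$, intrinsic; the problem then reduces to Theorem \ref{thm1}.

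For the second step, neither of your proposed routes works as stated. Reconstructing the spectral curve from its Prym is not a theorem you can appeal to (Prym--Torelli fails in general, and the spectral cover here has degree $2m$, not $2$, even though it carries an involution), and ``pinning down $H^0(X,L^2)$'' only produces a vector space, which does not determine the curve without additional projective data; note also that the Higgs field is $K(D)$-twisted, so the base is $\bigoplus_i H^0(K^{2i}D^{2i-1})$ rather than powers of $L$. The paper's mechanism is projective duality applied to the discriminant: fibers of $h_0$ over the discriminant $\mathcal{D}$ are uniruled or contain complete rational curves, while generic fibers are open subsets of abelian varieties and contain none (Propositions \ref{prop2} and \ref{prop3}), so $\mathcal{D}$ is recovered as the closure of the image of the union of all complete rational curves (Proposition \ref{prop4}). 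Intersecting with the top weight space $W_{2m}=H^0(K^{2m}D^{2m-1})$ (itself recovered from the $\mathbb{C}^*$-weights) gives $n+1$ irreducible components: the unique one that is not a hyperplane is the dual variety of $X\subset\mathbb{P}(W_{2m}^*)$ embedded by $|K^{2m}D^{2m-1}|$, and the remaining $n$ are the hyperplanes dual to the points of $D$ (Proposition \ref{prop6}). Biduality then recovers $(X,D)$ and shows the induced isomorphism carries $D$ to $D'$. Without this, or an equally concrete substitute, your outline does not close.
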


	In section $2$ we recall the notion of symplectic parabolic bundles and some properties of their moduli space. We also prove some technical lemmas regarding $(1,0)$-stable symplectic parabolic bundles. In section $3$ we give a description of the \textit{Hitchin map} and then we study the locus of the singular spectral curves, called the \textit{Hitchin discriminant}. We can intrinsically describe the image of the Hitchin discriminant as an abstract variety from the geometry of $\mathcal{M}_{\textnormal{Sp}}(2m,\alpha,L)$, the moduli space of stable symplectic parabolic bundles. In section $4$ we will use this description to prove the Torelli theorem for the moduli space of symplectic parabolic bundles (Theorem \ref{thm1}).\\
	In section $5$ we again consider the Hitchin map for $\mathcal{N}_{\textnormal{Sp}}(2m,\alpha,L)$, the moduli space of symplectic parabolic Higgs bundles. The fiber over the origin is called the nilpotent cone. The moduli space $\mathcal{M}_{\textnormal{Sp}}(2m,\alpha,L)$ is embedded in the nilpotent cone as a component and we will see that it is the unique irreducible component of the nilpotent cone that doesn't admit a non-trivial $\mathbb{C}^*$-action. In other words, if we were given $\mathcal{N}_{\textnormal{Sp}}(2m,\alpha,L)$ and the Hitchin map, we would recover $\mathcal{M}_{\textnormal{Sp}}(2m,\alpha,L)$, and then we can apply the Torelli theorem for $\mathcal{M}_{\textnormal{Sp}}(2m,\alpha,L)$ to recover $X$.\\
	We are given only the isomorphism class of $\mathcal{N}_{\textnormal{Sp}}(2m,\alpha,L)$. So we have to recover the Hitchin map. The idea is same as in \cite{BG03}. Consider an algebraic variety $Y$, which is isomorphic to $\mathcal{N}_{\textnormal{Sp}}(2m,\alpha,L)$. Then the natural map
	\[
	m : Y \to \textnormal{Spec}(\Gamma(Y))
	\]
	is isomorphic to the Hitchin map up to an automorphism. More precisely, by Proposition \ref{prop5}, $\textnormal{Spec}(\Gamma(Y)) \cong \mathbb{A}^N$, where $N= \dim \mathcal{M}_{\textnormal{Sp}}(2m,\alpha,L)$, and there is a commutative diagram \ref{diag1}.\\
	The standard $\mathbb{C}^*$-action on $\mathcal{N}_{\textnormal{Sp}}(2m,\alpha,L)$ given by sending a Higgs pair $(E,\Phi)$ to $(E,t\Phi)$ descends to an action on the Hitchin space, and the origin is the only fixed of this descended action. In section $6$ we use the Kodaira-Spencer map to prove that if $g$ is a $\mathbb{C}^*$-action, having exactly one fixed point, and admitting a lift to $\mathcal{N}_{\textnormal{Sp}}(2m,\alpha,L)$, then this fixed point is the origin of the Hitchin space (see Proposition \ref{prop10}). So using this property we recover the origin of the Hitchin space. Hence we recover the nilpotent cone, and by the previous observations, the proof of the theorem is complete.

	\section{Preliminaries}
	\subsection{Parabolic Vector Bundles}Let $X$ be a compact Riemann surface, and let $D \subset X$ be a finite subset of $n \geq 1$ distinct points. A \textit{parabolic vector bundle} $E_*$ on $X$ is a holomorphic vector bundle $E$ of rank $r$ over $X$ together with a parabolic structure, i.e. for every point $p \in D$, we have
	\begin{enumerate}
		\item a filtration of subspaces  $$E|_p=: E_{p,1}\supsetneq \dots \supsetneq E_{p,r(p)} \supsetneq \{0\}, $$
		\item a sequence of real numbers (parabolic weights)  satisfying $$0\leq \alpha_1(p) < \alpha_2(p) < \dots < \alpha_r(p) < 1.$$
	\end{enumerate}
	The parabolic structure is said to have \textit{full flags} whenever dim$(E_{p,i}/E_{p,i+1}) = 1$ \hspace{1cm}$\forall i, \forall p\in D$. We denote $\alpha = \{(\alpha_1(p),\dots,\alpha_r(p)) \}_{p \in D}$ to the system of weights corresponding to the fixed parabolic structure. \\
	The \textit{parabolic degree} of a parabolic vector bundle $E_*$ is defined as
	\[
	\operatorname{pardeg}(E_*):= \deg(E)+ \sum\limits_{p\in D}\sum\limits_{i} \alpha_i(p) \cdot \dim(E_{p,i}/E_{p,i+1})
	\]
	and the real number $\operatorname{pardeg}(E_*)/\text{rank}(E)$ is called the \textit{parabolic slope} of $E_*$ and it is denoted by $\mu_{par}(E_*)$. The dual of a parabolic bundle and the tensor product of two parabolic bundles can be defined in a natural way (see \cite{Y95}).\\
	A \textit{parabolic homomorphism} $\phi : E_* \to E^\prime_*$ between two parabolic bundles is a homomorphism of vector bundles that satisfies
	the following: at each $p \in D$ we have $\phi_p(E_{p,i}) \subset E_{p,i+1}^\prime$ whenever $\alpha_i(p) > \alpha_{i+1}^\prime(p)$. Furthermore, we call such morphism \textit{strongly parabolic} if $\alpha_i(p) \geq \alpha_{i+1}^\prime(p)$ implies $\phi_p(E_{p,i}) \subset E_{p,i+1}^\prime$ for every $p \in D$.

	\subsection{Symplectic Parabolic Bundles} Fix a parabolic line bundle $L_*$. Let $E_*$ be a parabolic bundle and let
	\[
	\psi : E_* \otimes E_* \to L_*
	\]
	be a homomorphism of parabolic bundles. Tensoring both sides with the parabolic dual $E^\vee_*$ we get a homomorphism
	\[
	\psi \otimes Id : (E_* \otimes E_*) \otimes E^\vee_* \to L_* \otimes E^\vee_*.
	\]
	The trivial line bundle $\mathcal{O}_X$ equipped with the trivial parabolic structure (meaning parabolic weights are all zero) is realized as a parabolic subbundle of $E_* \otimes E^\vee_*$. Let
	\[
	\tilde{\psi} : E_* \to L_* \otimes E^\vee_*
	\]
	be the homomorphism defined by the composition
	\[
	E_* = E_* \otimes \mathcal{O}_X  \xhookrightarrow{} E_* \otimes (E_* \otimes E^\vee_*) = (E_* \otimes E_*) \otimes E^\vee_* \xrightarrow{\psi \otimes Id} L_* \otimes E^\vee_*.
	\]
	\begin{definition} A \textit{symplectic parabolic bundle} is a pair $(E_*,\psi)$ of the above form such that $\psi$ is anti-symmetric and the homomorphism $\tilde{\psi}$ is an isomorphism.
	\end{definition}
	
	\subsection{Parabolic Higgs Bundles} Let $K$ be the canonical bundle on $X$. We write $K(D) \coloneqq K \otimes \mathcal{O}(D)$. A \textit{parabolic Higgs bundle} on $X$ is a parabolic bundle $E_*$ on $X$ together with a Higgs field $\Phi : E_* \to E_* \otimes K(D)$ such that $\Phi$ is strongly parabolic.\
	
	Higgs field associated to a parabolic Higgs bundle is called \textit{parabolic Higgs field}.
	
	\subsection{Symplectic Parabolic Higgs Bundles} Let $(E_*,\psi)$ be a symplectic parabolic bundle on $X$. A parabolic Higgs field on $E_*$ will induce a parabolic Higgs field on $L_* \otimes E^\vee_*$. A parabolic Higgs field $\Phi$ is said to be compatible with  $\psi$ if $\tilde{\psi}$ takes $\Phi$ to the induced parabolic Higgs field on $L_* \otimes E^\vee_*$.\
	
	A \textit{symplectic parabolic Higgs bundle} $(E_*,\Phi,\psi)$ is a symplectic parabolic bundle $(E_*,\psi)$  togther with a parabolic Higgs field $\Phi$ on $E_*$ which is compatible with $\psi$.
	
	\begin{definition} A holomorphic subbundle $F \subset E$ is called \textit{isotropic} if $\psi(F \otimes F) = 0$. The parabolic structure on $E$ will induce a parabolic structure on $F$. A symplectic parabolic Higgs bundle $(E_*,\Phi,\psi)$ is called \textit{stable} (resp. \textit{semistable}) if for every isotropic subbundle $F \subset E$ of positive rank, the following condition holds
		\[
		\mu_{par}(F_*) < \mu_{par}(E_*) \hspace{0.4cm}(\text{resp.} \hspace{0.15cm} \mu_{par}(F_*) \leq \mu_{par}(E_*)).
		\]
	\end{definition}\
	In \cite{BBN01} and \cite{BBN03}, principal bundles with parabolic structures were defined when all parabolic weights are rational. By \cite{BMW11}, the definition of symplectic parabolic bundle coincides with the definition in \cite{BBN01} and \cite{BBN03} when all parabolic weights are rational.\\
	Let 
	\[
	J = \begin{bmatrix} O_{m \times m} & I_{m \times m} \\ -I_{m \times m} & O_{m \times m} \end{bmatrix}
	\]
	be the standard symplectic form on $\mathbb{C}^{2m}$. Consider the group
	\begin{equation}\label{eqn6}
	\text{Gp}(2m,\mathbb{C}) = \{ A \in \text{GL}(2m,\mathbb{C}) : A^tJA=cJ \text{  for some  } c \in \mathbb{C}^*\}.
	\end{equation}
	This group is an extension of $\mathbb{C}^*$ by the symplectic group $\text{Sp}(2m,\mathbb{C})$
	\[
	1 \to \text{Sp}(2m,\mathbb{C}) \xrightarrow{} \text{Gp}(2m,\mathbb{C}) \xrightarrow[]{p} \mathbb{C}^* \xrightarrow[]{} 1,
	\]
	where $p(A)=c$ for $A$ and $c$ as in (\ref{eqn6}). It follows that $\det(A)=p(A)^m$ for all $A \in \text{Gp}(2m,\mathbb{C})$. When all parabolic weights are rational, giving a symplectic parabolic bundle of rank $2m$ is equivalent to giving a parabolic principal $\text{Gp}(2m,\mathbb{C})$-bundle.
	\subsection{Moduli Space of Symplectic Parabolic Higgs bundles}
	The moduli space of stable parabolic $G$-bundles of rank $r$ and degree $d$ and fixed parabolic structure $\alpha$ was described in \cite{BR89} and \cite{BBN01}. It is a normal projective variety. 	Fix a parabolic line bundle $L$ with trivial parabolic structure.  
	Let $\mathcal{M}_{\text{Sp}}(2m,\alpha,L)$ denote the moduli space of stable symplectic parabolic bundles of rank $2m$ $ (m > 1)$ and fixed parabolic structure $\alpha$, with the symplectic form taking values in $L$. When the parabolic structure $\alpha$ have full flags, it is of dimension
	\[
	\dim \mathcal{M}_{\text{Sp}}(2m,\alpha,L) = m(2m+1)(g-1) + m^2n,
	\]
	where $n$ is the number of marked points on $X$.\\
	The moduli space $\mathcal{N}_{\text{Sp}}(2m,\alpha,L)$ of stable symplectic parabolic Higgs bundles of rank $2m$ is a smooth irreducible complex variety. The moduli space $\mathcal{M}_{\text{Sp}}(2m,\alpha,L)$ is embedded in $\mathcal{N}_{\text{Sp}}(2m,\alpha,L)$, by considering the zero Higgs fields. By the parabolic version of Serre duality, $\mathcal{N}_{\text{Sp}}(2m,\alpha,L)$ contains the cotangent bundle $T^*\mathcal{M}_{\text{Sp}}(2m,\alpha,L)$ as an open subset. Therefore,
	\[
	\dim\mathcal{N}_{\text{Sp}}(2m,\alpha,L) = 2\dim \mathcal{M}_{\text{Sp}}(2m,\alpha,L) = 2m(2m+1)(g-1) + 2m^2n.
	\]
From now on we assume the parabolic structure to have full flags and rational parabolic weights. We also assume that the parabolic weights are small enough so that the stability of the symplectic parabolic Higgs bundle is equivalent to the stability of the underlying vector bundle.
	\begin{definition}
		A symplectic parabolic bundle $(E_*,\psi)$ is $(k,l)$-stable (res. $(k,l)$- semistable) if for all non-zero isotropic subbundles $F \subset E$, 
		\[
		\frac{\text{pardeg}(F_*) + k}{\text{rank}(F)} < \frac{\text{pardeg}(E_*) - l}{\text{rank}(E)} \hspace{1cm}(\text{res.  } \leq )
		\]
		holds.

	\end{definition}
Observe that if $k,l \geq 0$, then a $(k,l)$-stable symplectic parabolic bundle is stable in the usual sense.
	
	\begin{prop}\label{prop1}
		For $ g \geq 3$, the locus of $(1,0)$-stable symplectic parabolic bundles is a non-empty Zariski open subset of $\mathcal{M}_{\textnormal{Sp}}(2m,\alpha,L)$.
	\end{prop}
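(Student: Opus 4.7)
The proof has two parts: openness and non-emptiness.

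\emph{Openness.} If $E_*$ fails to be $(1,0)$-stable, there is a non-zero isotropic $F \subset E$ with
\[
\frac{\text{pardeg}(F_*)+1}{\text{rank}(F)} \;\geq\; \mu_{par}(E_*).
\]
Since $E_*$ is assumed stable, one also has $\mu_{par}(F_*) < \mu_{par}(E_*)$, so sandwiching gives
\[
\mu_{par}(E_*) - \frac{1}{\text{rank}(F)} \;\leq\; \mu_{par}(F_*) \;<\; \mu_{par}(E_*),
\]
which for each rank $1 \leq r \leq m$ leaves only finitely many admissible values of $\text{pardeg}(F_*)$. For each such numerical type $(r,d')$, the isotropic subbundles of that type form a bounded family (a closed subscheme of a relative Quot scheme cut out by the isotropy condition $\psi|_{F \otimes F} = 0$), so its image in $\mathcal{M}_{\textnormal{Sp}}(2m,\alpha,L)$ is closed. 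The finite union of these closed strata is the $(1,0)$-unstable locus, and its complement is Zariski open.

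\emph{Non-emptiness.} The moduli space $\mathcal{M}_{\textnormal{Sp}}(2m,\alpha,L)$ is irreducible of dimension $m(2m+1)(g-1)+m^2 n$, so it suffices to prove that every stratum of the $(1,0)$-unstable locus, indexed by one of the finitely many admissible types $(r,d')$ above, has dimension strictly smaller than this. For a fixed $(r,d')$, I parametrise bundles in the stratum via the filtration
\[
0 \subset F \subset F^{\perp} \subset E,
\]
where $F^{\perp}/F$ inherits a symplectic form with values in $L_*$ and $E/F^{\perp} \cong L_* \otimes F_*^{\vee}$. This fibres the stratum over the product of the moduli of parabolic $F_*$ of rank $r$ with the moduli of symplectic parabolic bundles $F^{\perp}_*/F_*$ of rank $2(m-r)$ with values in $L_*$; the fibre is given by the extension classes compatible with the symplectic form, computed by the appropriate parabolic $\mathrm{Ext}^{1}$ groups twisted by $L_*$, whose dimensions are given by parabolic Riemann--Roch. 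The symplectic-compatibility of the extensions effectively halves (up to correction terms coming from $L_*$ and the flags) the size of the outer extension group, relative to the non-symplectic case.

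The main obstacle is the resulting numerical inequality. Summing the three contributions above and comparing with $m(2m+1)(g-1)+m^2 n$ has to yield a strict drop for every admissible $(r,d')$ with $1\leq r\leq m$; the tightest case is the Lagrangian one $r=m$, for which the genus-linear term in the dimension of the stratum is sharpest, and the hypothesis $g \geq 3$ is exactly what ensures that the $(g-1)$-proportional contribution to $\dim \mathcal{M}_{\textnormal{Sp}}(2m,\alpha,L)$ dominates the parabolic and extension terms coming from the stratum. Once the inequality is checked type by type, irreducibility of $\mathcal{M}_{\textnormal{Sp}}(2m,\alpha,L)$ forces the $(1,0)$-stable locus to be a non-empty open subset.
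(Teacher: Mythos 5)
Your openness argument is essentially the paper's: finitely many admissible numerical types for a destabilizing isotropic $F$, boundedness via the (parabolic) Quot scheme, hence the $(1,0)$-unstable locus is a finite union of closed subsets. That half is fine. For non-emptiness you also adopt the correct overall strategy (show every $(1,0)$-unstable stratum has dimension strictly less than $\dim\mathcal{M}_{\textnormal{Sp}}(2m,\alpha,L)$ and invoke irreducibility), and your parametrization by the filtration $0\subset F\subset F^{\perp}\subset E$ with symplectically compatible extension classes is a legitimate alternative to the paper's route, which instead computes $h^{1}(P^{H}(\mathfrak{H}))$ for the reduction $P^{H}$ to a parabolic subgroup $H\subset\textnormal{Gp}(2m,\mathbb{C})$ and feeds the bound $\textnormal{pardeg}(P^{H}(\mathfrak{H}))\geq -2m$ and the rank formula of Lemma \ref{lemma1} into Riemann--Roch. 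The two counts are the same computation in different clothing: the summands $\textnormal{End}(F_*)$, $(F_*^{\perp}/F_*)^{\vee}\otimes F_*$, $\textnormal{Sym}^{2}F_*\otimes L^{\vee}$, $\textnormal{Sym}^{2}(F_*^{\perp}/F_*)^{\vee}\otimes L$ of the adjoint bundle are exactly your ``three contributions plus the halving by symplectic compatibility.''

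The gap is that the decisive numerical inequality is never verified; it is precisely this inequality that carries the hypotheses $g\geq 3$ and $m>1$, so deferring it leaves the proposition unproved. Worse, your stated intuition about where it is tight is wrong. Carrying out the count as in the paper, the $(g-1)$-coefficient of the stratum of type $r$ differs from that of $\dim\mathcal{M}_{\textnormal{Sp}}(2m,\alpha,L)$ by $f(r)=-2mr+\tfrac{3r^{2}-r}{2}$, a convex function of $r$ whose maximum on $1\leq r\leq m$ is attained at the endpoint $r=1$ (for $m\geq 3$; the endpoints tie at $m=2$), since $f(1)-f(m)=\tfrac{(m-1)(m-2)}{2}\geq 0$. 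So the binding case is a rank-one isotropic subbundle, not the Lagrangian case $r=m$ (for which even $g=2$ would suffice when $m\geq 3$). Since your heuristic for why $g\geq 3$ is ``exactly what is needed'' points at the wrong stratum, the omitted verification is a genuine gap rather than a routine computation. A secondary issue: a destabilizing $F_*$ need not itself be a stable parabolic bundle, so ``the moduli of parabolic $F_*$'' is not quite the right base of your fibration; you need a bounded family of such $F_*$ (or, as in the paper, the statement $h^{0}(P^{H}(\mathfrak{H}))=1$, which uses stability of the underlying bundle $E$) to control the fibres of the map from your parameter space to $\mathcal{M}_{\textnormal{Sp}}(2m,\alpha,L)$.
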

	\begin{proof} The proof is analogous to the proof of \cite[Proposition 2.7]{BG06}. If a stable symplectic parabolic bundle $(E_*,\psi)$ is not $(1,0)$-stable, then there exist an isotropic subbundle $F\subset E$ such that 
		\[
		\frac{\text{pardeg}(F_*) + 1}{\text{rank}(F)} \geq \frac{\text{pardeg}(E_*)}{\text{rank}(E)}.
		\]
		Or equivalently, $F_*$ satisfies the following
		\[
		\frac{\text{pardeg}(F_*) + 1}{\text{rank}(F)} \geq \frac{\text{pardeg}(E_*/F_*)}{\text{rank}(E/F)}.
		\]
		
		Therefore the ranks, degrees and weight-multiplicities for  quotients $E_*/F_*$ of $E_*$ vary over finite sets. Hence, by the properness of the (parabolic) Quot scheme, the complement of $(1,0)$-stable  bundles in $\mathcal{M}_{\text{Sp}}(2m,\alpha,L)$ is a finite union of closed sets. Thus the locus of $(1,0)$-stable bundles is an open subset of $\mathcal{M}_{\text{Sp}}(2m,\alpha,L)$. So it remains to show that the locus of $(1,0)$-stable bundles is non-empty.\\
		Let $(E_*,\psi)$ be a stable symplectic parabolic bundle which is not $(1,0)$-stable and let $P$ be the parabolic principal $\text{Gp}(2m,\mathbb{C})$-bundle corresponding to $(E_*,\psi)$. Let $F\subset E$ be an isotropic subbundle which contradicts the $(1,0)$-stability condition. This gives a reduction of the structure group $P^H \subset P$ to a maximal parabolic subgroup $H \subset \text{Gp}(2m,\mathbb{C})$.\\
		From the openness of the stability condition, it follows that a deformation of $P^H$ as parabolic principal $H$-bundle will give deformations of $P$ which are not $(1,0)$-stable but are still stable. Also, any deformation of $P$ which is not $(1,0)$-stable comes from a deformation of $P^H$, for some maximal parabolic subgroup $H \subset \text{Gp}(2m,\mathbb{C})$.\\
		The dimension of the tangent space of these deformations is $h^1(P^H(\mathfrak{H})) - g$, where $\mathfrak{H}$ is the Lie algebra of $H$ and $P^H(\mathfrak{H})$ is the adjoint bundle. \\
		We will show that $h^0(P^H(\mathfrak{H})) = 1$. First note that \begin{equation}\label{eqn4}
		h^0(P^H(\mathfrak{H})) \geq \dim\mathfrak{z}(\mathfrak{H})=1,
		\end{equation}
		where $\mathfrak{z}(\mathfrak{H}) \subset \mathfrak{H}$ is the center.\\ Since $P$ is a stable parabolic $\text{Gp}(2m,\mathbb{C})$-bundle and the parabolic weights are small enough such that the underlying vector bundle $E$ is also stable, we have
		\[
		H^0(X,P(\mathfrak{gp}(2m,\mathbb{C}))) \subset H^0(X,E(\mathfrak{gl}(2m,\mathbb{C})))= \mathfrak{z}(\mathfrak{gl}(2m,\mathbb{C})).
		\]
		Since $\mathfrak{H}$ is a submodule of the $H$-module $\mathfrak{gp}(2m,\mathbb{C})$, the adjoint bundle $P^H(\mathfrak{H})$ is a subbundle of $P(\mathfrak{gp}(2m,\mathbb{C}))$. So,
		\[
		h^0(P^H(\mathfrak{H})) \leq h^0(P(\mathfrak{gp}(2m,\mathbb{C}))) \leq \dim \mathfrak{z}(\mathfrak{gl}(2m,\mathbb{C})) =1.
		\]
		This inequality together with the inequality in (\ref{eqn4}) gives $h^0(P^H(\mathfrak{H})) = 1$.\\
		By Riemann-Roch theorem, 
		\[h^1(P^H(\mathfrak{H})) - g = -\text{pardeg}(P^H(\mathfrak{H})) + (\text{rank}(P^H(\mathfrak{H})) - 1)(g - 1).
		\]
		Hence by Lemma \ref{lemma1}, the dimension of the subscheme $Z \subset \mathcal{M}_{Sp}(2m,\alpha,L)$ defined by all stable symplectic parabolic bundles which are not $(1,0)$-stable satisfies the inequality
        \[
        \begin{aligned}
        \dim Z & \leq \max_{1\leq r \leq m}\{2m + (2m^2 + m -2mr + \frac{3r^2 - r}{2})(g-1)\}\\
        & = \dim\mathcal{M}_{Sp}(2m,\alpha,L) - m^2n +2m + \max_{1\leq r \leq m}r(-2m + \frac{3r - 1}{2})(g-1)\\
        & \leq \dim\mathcal{M}_{Sp}(2m,\alpha,L) - m^2n +2m - 1\\
        & < \dim\mathcal{M}_{Sp}(2m,\alpha,L).
        \end{aligned} 
        \]
		Here we have used that $m > 1$ and $g \geq 3$. This completes the proof assuming Lemma \ref{lemma1}.
	\end{proof}
	
	\begin{lemma}\label{lemma1}
		Let $(E_*, \psi)$ be a stable symplectic parabolic bundle of rank $2m$, where $\psi : E_* \otimes E_* \to L$. Let $P$ be the parabolic principal $\textnormal{Gp}(2m,\mathbb{C})$-bundle corresponding to $(E_*,\psi)$. Let $F \subset E$ be an isotropic subbundle of rank $r$ that contradicts the $(1,0)$-stability condition for $(E_*,\psi)$. The subbundle $F_*$ gives a reduction of the structure group $P^H \subset P$ to a parabolic subgroup $H \subset \textnormal{Gp}(2m,\mathbb{C})$. Then 
		\[
		\textnormal{pardeg}(P^H(\mathfrak{H})) \geq -2m,
		\]
		where $P^H(\mathfrak{H})$ is the adjoint bundle of $P^H$. Also, $\textnormal{rank}(P^H(\mathfrak{H})) = 2m^2 + m - 2mr + \frac{3r^2-r}{2} + 1$.
	\end{lemma}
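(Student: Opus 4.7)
The plan is to decompose $\mathfrak{H}$ according to the flag $0 \subset F \subset F^\perp \subset E$ and compute the rank and the parabolic degree piece by piece. Since $(E_*,\psi)$ is a symplectic parabolic bundle with form valued in $L$, we have $\det E_* \cong L^m$. Combined with the isomorphism $E/F^\perp \cong L \otimes F^\vee$ induced by $\tilde{\psi}$, this gives $\det(F^\perp/F)_* \cong L^{m-r}$, so that $(F^\perp/F)_*$ is itself a symplectic parabolic bundle of rank $2(m-r)$ with form valued in $L$.

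For the rank, any element of $\mathfrak{H}$ preserves $F$ and hence $F^\perp$, so it admits a block decomposition whose diagonal blocks lie in $\mathfrak{gl}(F)$, $\mathfrak{gp}(F^\perp/F)$ and $\mathfrak{gl}(E/F^\perp)$ and whose strictly upper-triangular blocks are $A_{12},A_{23},A_{13}$. The symplectic-similitude condition forces the block on $E/F^\perp$ to be determined by the block on $F$ together with the similitude scalar, forces $A_{23}$ to be determined by $A_{12}$ via the pairing $F\otimes (E/F^\perp)\to L$, and forces $A_{13}$ to lie in the symmetric part $\textnormal{Sym}^2 F\otimes L^{-1}$ of $\textnormal{Hom}(E/F^\perp,F)\cong F\otimes F\otimes L^{-1}$. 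Adding the dimensions of the free parameters gives $r^2 + 2(m-r)^2 + (m-r) + 1 + r(2m-2r) + r(r+1)/2$, which simplifies to the claimed $2m^2+m-2mr+(3r^2-r)/2+1$.

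For the parabolic degree, the same flag induces a filtration of $P^H(\mathfrak{H})$ whose associated graded bundle splits as a direct sum of three pieces. The Levi piece is $\textnormal{End}(F_*)\oplus \mathfrak{sp}((F^\perp/F)_*) \oplus \mathcal{O}$, whose three summands each have parabolic degree zero: the endomorphism summand is automatically of degree zero, the center is trivial, and the symplectic part vanishes because $\mathfrak{sp}(V_*)\cong L\otimes \textnormal{Sym}^2 V^\vee_*$ while $\textnormal{pardeg}((F^\perp/F)_*) = (m-r)\deg L$. The next graded piece is $\textnormal{Hom}((F^\perp/F)_*,F_*)$, of parabolic degree $2(m-r)\textnormal{pardeg}(F_*) - r(m-r)\deg L$. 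The deepest piece is $\textnormal{Sym}^2 F_*\otimes L^{-1}$, of parabolic degree $(r+1)\textnormal{pardeg}(F_*) - \binom{r+1}{2}\deg L$. Summing these, and using $\mu_{par}(E_*) = \deg L/2$ (which follows from $\textnormal{pardeg}(E_*) = m\deg L$), a short manipulation yields
\[
\textnormal{pardeg}(P^H(\mathfrak{H})) \;=\; (2m-r+1)\bigl(\textnormal{pardeg}(F_*) - r\mu_{par}(E_*)\bigr).
\]

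Finally, the failure of $(1,0)$-stability reads $\textnormal{pardeg}(F_*)+1 \geq r\mu_{par}(E_*)$, i.e.\ $\textnormal{pardeg}(F_*)-r\mu_{par}(E_*)\geq -1$, so $\textnormal{pardeg}(P^H(\mathfrak{H})) \geq -(2m-r+1) \geq -2m$ for $r\geq 1$. The most delicate step is the combinatorics of the symplectic-similitude constraints forcing the three graded pieces above, in particular identifying the deepest piece with the symmetric part $\textnormal{Sym}^2 F_*\otimes L^{-1}$ rather than all of $F_*\otimes F_*\otimes L^{-1}$; once these identifications are in hand, the rank count and the degree summation are mechanical.
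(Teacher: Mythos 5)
Your proof is correct and follows essentially the same route as the paper: the same filtration $F_*\subset F_*^\perp\subset E_*$, the same block description of $\mathfrak{H}$ with the same identification of the graded pieces of $P^H(\mathfrak{H})$ as $\textnormal{End}(F_*)\oplus\mathfrak{sp}((F_*^\perp/F_*))\oplus\mathcal{O}_X\oplus\textnormal{Hom}(F_*^\perp/F_*,F_*)\oplus(\textnormal{Sym}^2F_*\otimes L^\vee)$, and the same rank and degree bookkeeping. Your factored formula $\textnormal{pardeg}(P^H(\mathfrak{H}))=(2m-r+1)\bigl(\textnormal{pardeg}(F_*)-r\mu_{par}(E_*)\bigr)$ is exactly what the paper's decomposition yields (the coefficient of $f=\textnormal{pardeg}(F_*)$ in the paper's displayed expression, $2m-r-1$, is a sign typo for $2m-r+1$; only the latter gives the bound $\geq -2m$ from $2f\geq lr-2$), so your argument is sound.
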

	\begin{proof}
		Let $P^H(\mathfrak{gl}(2m,\mathbb{C}))$ be the associated bundle for the adjoint action of $H$ on $\mathfrak{gl}(2m,\mathbb{C})$. So, $P^H(\mathfrak{gl}(2m,\mathbb{C})) = \text{End}(E_*)$. Since $\mathfrak{H}$ is a submodule of the $\mathfrak{gp}(2m,\mathbb{C})$-module $\mathfrak{gl}(2m,\mathbb{C})$, the adjoint bundle $P^H(\mathfrak{H})$ is a subbundle of $P^H(\mathfrak{gl}(2m,\mathbb{C})) = \text{End}(E_*)$. The subbundle $P^H(\mathfrak{H})$ preserves the filtration
		\begin{equation}\label{eqn3}
		F_*\subset F_*^\perp \subset E_*,
		\end{equation}
		where $F_*^\perp$ is the orthogonal part of $F_*$ with respect to the symplectic structure on $E_*$. So $\textnormal{rank}(F_*)+ \textnormal{rank}(F_*^\vee)=2m$.\\
		Let $L(H)$ denote the Levi quotient of $H$. Fixing $T \subset B \subset H$, where $T \subset \text{Gp}(2m,\mathbb{C})$ is a maximal torus and $B \subset \text{Gp}(2m,\mathbb{C})$ is a Borel subgroup, $L(H)$ can be realized as a subgroup of $H$. Indeed, $L(H)$ is identified with the maximal connected $T$-invariant reductive subgroup of $H$.\\
		Using the projection of $H$ to the Levi quotient $L(H)$, we can extend the structure group of $P^H$ to obtain a $L(H)$-bundle, which we denote by $P^{L(H)}$. Let $P^{L(H)}(H)$ denote the $H$-bundle obtained by extending the structure group of $P^{L(H)}$ using the inclusion $L(H)\subset H$. The $H$-bundle $P^H$ is topologically isomorphic to the $H$-bundle $P^{L(H)}(H)$. Therefore their adjoint bundles $P^H(\mathfrak{H})$ and $P^{L(H)}(H)(\mathfrak{H})$ are topologically isomorphic. Hence we can assume that $P^H$ admits a reduction of the structure group $P^{L(H)}\subset P^H$ to the subgroup $L(H) \subset H$. Fix one such reduction of the structure group  $P^{L(H)}\subset P^H$ to $L(H)$.\\
		Using the reduction of the structure group $P^{L(H)} \subset P^H$ the above filtration (\ref{eqn3}) splits, i.e. 
		\begin{equation}\label{eqn1}
		E_* \cong F_* \oplus (F_*^\perp/F_*) \oplus (E_*/F_*^\perp).
		\end{equation}
		Therefore, a locally defined section of $P^H(\mathfrak{H})$ has the form
		\begin{equation}\label{eqn2}
		A = \begin{pmatrix}
		\alpha & \beta & \gamma \\
		0 & \delta & \epsilon \\
		0 & 0 & \eta
		\end{pmatrix}.
		\end{equation}
		The symplectic structure $\psi : E_* \otimes E_* \to L$ induces an isomorphism
		\[
		\psi : E_* \to E_*^\vee \otimes L,
		\]
		which has the property that the composition 
		\[
		F_*^\perp \xhookrightarrow{} E_* \xrightarrow{\psi} E_*^\vee \otimes L \xrightarrow{} F_*^\vee \otimes L
		\]
		vanishes. So, we obtain an induced isomorphism
		\[
		E_*/F_*^\perp \cong F_*^\vee \otimes L,
		\]
		denoted by $id$. Also, $\psi$ induces a symplectic structure on the parabolic bundle $F_*^\perp/F_*$, and this induced symplectic sturcture will produce an isomorphism
		\[
		\psi' : F_*^\perp/F_* \xrightarrow{} (F_*^\perp/F_*)^\vee \otimes L .
		\]
		Using (\ref{eqn1}), the isomorphism $\psi$ is of the form
		\[
		\psi = \begin{pmatrix}
		0 & 0 & -id \\
		0 & \psi' & 0 \\
		id & 0 & 0
		\end{pmatrix}.
		\]
		A parabolic subalgebra $\mathfrak{H}$ of $\mathfrak{gp}(2m,\mathbb{C})$ is of the form $ \mathfrak{H} = \mathfrak{H'}\oplus \mathbb{C}$, where $\mathfrak{H'}$ is a parabolic subalgebra of $\mathfrak{sp}(2m,\mathbb{C})$, and $\mathbb{C}$ is the center of $\mathfrak{gp}(2m,\mathbb{C})$. Since this decomposition is preserved by the adjoint action of $\text{Gp}(2m,\mathbb{C})$, we get that
		\[
		P^H(\mathfrak{H}) = P^H(\mathfrak{H'})\oplus \mathcal{O}_X
		\]
		The local section $A$ of $P^H(\mathfrak{H})$, defined in (\ref{eqn2}), lies in $P^H(\mathfrak{H'})$ if and only if
		\[
		\psi \circ A =  \begin{pmatrix}
		0 & 0 & -\eta \\
		0 & \psi'\circ \delta & \psi' \circ \epsilon \\
		\alpha & \beta & \gamma
		\end{pmatrix} : E_* \to E_*^\vee \otimes L
		\]
		is symmetric, i.e. the following conditions hold:\\
		$(1) \hspace{0.2cm}-\eta = \alpha^t$,\\
		$(2) \hspace{0.2cm}\psi' \circ \epsilon = \beta^t$, and\\
		$(3) \hspace{0.2cm}\psi' \circ \delta$ and $\gamma$ are symmetric. \\
		Hence, we have an isomorphism
		\[
		P^H(\mathfrak{H'}) \cong \text{End}(F_*) \oplus \Bigg(\bigg(\frac{F_*^\perp}{F_*}\bigg)^\vee \otimes F_*\Bigg) \oplus ((\text{Sym}^2F_*) \otimes L^\vee) \oplus \Bigg(\text{Sym}^2\bigg(\frac{F_*^\perp}{F_*}\bigg)^\vee \otimes L\Bigg)
		\]
		defined by
		\[
		A \mapsto (\alpha,\beta, \gamma, \psi'\circ \delta).
		\]
		A straightforward calculation using this isomorphism gives  \[\textnormal{rank}(P^H(\mathfrak{H}))= \textnormal{rank}(P^H(\mathfrak{H'})) + 1 = 2m^2 + m - 2mr + \frac{3r^2-r}{2} + 1.
		\]
		Also,
		\[
		\textnormal{pardeg}(P^H(\mathfrak{H}))=\textnormal{pardeg}(P^H(\mathfrak{H'})) = 2mf - rf - f -rml + \frac{r^2 - r}{2}l,
		\]
		where $l=\deg L$ and $f=\textnormal{pardeg}(F_*)$. Since $F_*$ contradicts $(1,0)$-stability condition, we have $2f \geq lr - 2$, and since $E_*$ is stable, we have $2f < lr$. Using these two inequalities, we obtain \[\textnormal{pardeg}(P^H(\mathfrak{H})) \geq -2m.
		\] 
		This completes the proof of the lemma.
	\end{proof}

\begin{lemma}\label{lemma2}
	Let $(E_*,\psi)$ be a $(1,0)$-stable symplectic parabolic bundle. Let $x \in D$ and $1 \leq k \leq 2m$ be an integer. Suppose $E_{x,k}' \subsetneq E|_x$ such that 
	\[
	E_{x,k-1} \supsetneq E_{x,k}' \supsetneq E_{x,k+1}.
	\]
	 substitute $E_{x,k}$ by $E_{x,k}'$ to obtain a new quasi-parabolic structure. Then the symplectic parabolic bundle $E_{*'}$ with the new quasi-parabolic structure is stable.
\end{lemma}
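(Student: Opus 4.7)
The plan is to estimate, for each isotropic subbundle $F \subset E$, how much the induced parabolic degree of $F$ can change when $E_{x,k}$ is replaced by $E_{x,k}'$, and to show that this change is strictly less than $1$. Since the $(1,0)$-stability of $(E_*,\psi)$ leaves a slope cushion of $1/\operatorname{rank}(F)$ for every isotropic $F$, such an estimate will force stability of the modified symplectic parabolic bundle $(E_{*'},\psi)$.

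First I would record two immediate reductions. Because every step of the full flag has multiplicity one and the parabolic weights are kept fixed, $\operatorname{pardeg}(E_{*'}) = \operatorname{pardeg}(E_*)$, and hence $\mu_{par}(E_{*'}) = \mu_{par}(E_*)$. Moreover, the notion of isotropic subbundle depends only on the underlying holomorphic bundle $E$ and the bilinear form $\psi$, neither of which is altered, so the same family of subbundles must be tested for stability with respect to both parabolic structures.

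Next, for a fixed nonzero isotropic $F \subset E$, I would compute $\operatorname{pardeg}(F_{*'}) - \operatorname{pardeg}(F_*)$ using the jump formula
\[
\operatorname{pardeg}(F_*) = \deg F + \sum_{p \in D}\sum_{i=1}^{2m}\alpha_i(p)\bigl[\dim(F|_p \cap E_{p,i}) - \dim(F|_p \cap E_{p,i+1})\bigr].
\]
Only the summands at $p = x$ and $i \in \{k-1,k\}$ are affected. Writing $b = \dim(F|_x \cap E_{x,k})$ and $b' = \dim(F|_x \cap E_{x,k}')$, a short bookkeeping calculation gives
\[
\operatorname{pardeg}(F_{*'}) - \operatorname{pardeg}(F_*) = (b'-b)\bigl(\alpha_k(x) - \alpha_{k-1}(x)\bigr).
\]
The key estimate is $|b'-b|\le 1$: the quotient $E_{x,k-1}/E_{x,k+1}$ has dimension $2$ and contains the two distinct lines $E_{x,k}/E_{x,k+1}$ and $E_{x,k}'/E_{x,k+1}$; the image of $F|_x \cap E_{x,k-1}$ in this quotient meets each of these lines in a subspace of dimension at most $1$, and the two intersection dimensions differ by at most $1$. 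Combined with $0 \le \alpha_{k-1}(x) < \alpha_k(x) < 1$, this yields $|\operatorname{pardeg}(F_{*'}) - \operatorname{pardeg}(F_*)| < 1$.

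Finally, the $(1,0)$-stability hypothesis reads $\operatorname{pardeg}(F_*) + 1 < \operatorname{rank}(F)\,\mu_{par}(E_*)$, so chaining the two inequalities gives
\[
\operatorname{pardeg}(F_{*'}) < \operatorname{pardeg}(F_*) + 1 < \operatorname{rank}(F)\,\mu_{par}(E_*) = \operatorname{rank}(F)\,\mu_{par}(E_{*'}),
\]
i.e.\ $\mu_{par}(F_{*'}) < \mu_{par}(E_{*'})$, which is precisely the stability of $(E_{*'},\psi)$. The only step that requires real care is the estimate $|b-b'|\le 1$ and the resulting bound on the parabolic-degree shift; everything else is a mechanical exploitation of the slope slack built into $(1,0)$-stability.
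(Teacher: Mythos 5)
Your proposal is correct and follows essentially the same route as the paper: the same jump formula $\operatorname{pardeg}(F_{*'})-\operatorname{pardeg}(F_*)=(b'-b)(\alpha_k(x)-\alpha_{k-1}(x))$, the same codimension-one argument bounding $|b'-b|$ by $1$ (the paper only records the one-sided bound $b'\le b+1$, which is all that is needed), and the same use of the slope cushion from $(1,0)$-stability to conclude. No gaps.
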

\begin{proof}
	Let $F \subset E$ be an isotropic subbundle. Let $F_*$ and $F_{*'}$ be the vector bundles with parabolic structures induced by $E_*$ and $E_{*'}$ respectively. Therefore, we have the relation
	\[
	\text{wt}_x(F_{*'})= \text{wt}_x(F_*) + (\alpha_k(x) - \alpha_{k-1}(x))(\dim(F|_x \cap E_{x,k}')- \dim(F|_x \cap E_{x,k})). 
	\]
	Note that $E_{x,k+1}\subseteq E_{x,k}\cap E_{x,k}'$, and $E_{x,k+1}$ has codimension $1$ in both $E_{x,k}$ and  $E_{x,k}'$. So $\dim(F|_x \cap E_{x,k}') \leq \dim(F|_x \cap E_{x,k}) + 1 $, and hence
	\[
	\text{wt}_x(F_{*'}) \leq  \text{wt}_x(F_*) + (\alpha_k(x) - \alpha_{k-1}(x)) < \text{wt}_x(F_*) + 1.
	\]
	By the $(1,0)$-semistablity condition, we get 
	\[
\begin{aligned}
\frac{\text{pardeg}(F_{*'})}{\text{rank}(F)} &= \frac{\deg(F)+\sum_{x \in D}\text{wt}_x(F_{*'})}{\text{rank}(F)}\\
& < \frac{\deg(F)+\sum_{x \in D}\text{wt}_x(F_{*})+1}{\text{rank}(F)}\\
&= \frac{\text{pardeg}(F_{*})+1}{\text{rank}(F)}\\
&< \frac{\text{pardeg}(E_{*})}{\text{rank}(E)}\\
&= \frac{\text{pardeg}(E_{*'})}{\text{rank}(E)}
\end{aligned}
\]
Since this strict inequality holds for any isotropic subbundle $F \subset E$, we conclude that $E_{*'}$ is stable.
\end{proof}
We also need the following lemma for our purpose.
	
    \begin{lemma}\label{lemma3}
    	Suppose that $g \geq 4$ and the parabolic weights are small enough so that the stability of the symplectic parabolic Higgs bundle is equivalent to the stability of the underlying vector bundle. Then $H^0(\textnormal{End}(E_*)(x))=0$ for a generic stable symplectic parabolic bundle $E_*$.
   \end{lemma}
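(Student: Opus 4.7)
\noindent The approach is a standard semicontinuity reduction combined with an explicit construction, using Proposition \ref{prop1} as the starting input and Lemma \ref{lemma2} to supply the needed flag flexibility. First, on any parametrizing family of stable symplectic parabolic bundles the function $E_* \mapsto h^0(\textnormal{End}(E_*)(x))$ is upper semicontinuous by cohomology and base change. Hence the positivity locus is closed in $\mathcal{M}_{\textnormal{Sp}}(2m,\alpha,L)$, and it suffices to exhibit a single stable $(E_*,\psi)$ for which $H^0(\textnormal{End}(E_*)(x))$ vanishes. By Proposition \ref{prop1} we may take $(E_*,\psi)$ to be $(1,0)$-stable, which is precisely the hypothesis needed to deploy Lemma \ref{lemma2}.

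A Riemann--Roch check confirms numerical viability. The underlying endomorphism sheaf has rank $(2m)^2$ and degree zero, and up to a bounded parabolic weight correction
\[
\chi(\textnormal{End}(E_*)(x)) = (2m)^2(2-g) + O(1),
\]
which is strictly negative for $g \geq 4$. So no cohomological obstruction prevents $h^0 = 0$, and the genus hypothesis enters visibly here as well as in Proposition \ref{prop1} (whose estimate inherited from Lemma \ref{lemma1} already required $g \geq 3$).

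The main technical obstacle is actually producing the vanishing. The plan is to analyze a hypothetical non-zero $\phi \in H^0(\textnormal{End}(E_*)(x))$ through its behaviour at $x$: away from $x$, $\phi$ is an ordinary parabolic endomorphism, forced to be a scalar by stability of the underlying bundle $E$ (which follows from stability of $E_*$ under the small-weights hypothesis), so after subtracting the scalar part $\phi$ is determined by its residue $\rho := \mathrm{res}_x \phi \in \textnormal{End}(E|_x)$. The residue must respect both the parabolic filtration at $x$ and the fiberwise form $\psi|_x$. Now invoke Lemma \ref{lemma2}: the quasi-parabolic flag $E_{x,\bullet}$ can be moved through an open subset of the symplectic full flag variety at $x$ while preserving stability. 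For any fixed non-scalar symplectic-compatible $\rho$, the locus of flags it preserves is a proper closed subvariety of this flag variety, so a generic flag admits no such compatible $\rho$, forcing $\phi = 0$. The hard part will be executing this final codimension count rigorously: one must track how the filtration-shift condition at $x$, the symplectic constraint $\rho^\vee = -\rho$ relative to $\psi|_x$, and the parabolic weights interact, and show that their combined effect cuts the preserved-flag locus down to positive codimension in the (symplectic) flag variety. Once that dimension count is in hand, one picks a flag outside the bad subvariety, applies Lemma \ref{lemma2} to deduce that the resulting $E_{*'}$ is still a stable symplectic parabolic bundle, and concludes via the semicontinuity of Step 1.
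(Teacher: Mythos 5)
There is a genuine gap: the heart of the lemma is the existence of a single stable symplectic parabolic bundle with $H^0(\textnormal{End}(E_*)(x))=0$, and your proposal explicitly defers exactly that step (``the hard part will be executing this final codimension count rigorously''). The semicontinuity reduction and the Riemann--Roch sign check are fine but carry no real content here, and the sketched codimension count has a structural problem that I do not think can be repaired by genericity of the flag alone. Modulo the scalars, a section $\phi$ of $\textnormal{End}(E)(x)$ is indeed determined by its residue, so the relevant object is the linear subspace $R=\mathrm{res}_x\bigl(H^0(\textnormal{End}(E)(x))\bigr)\subseteq \textnormal{End}(E|_x)$ of residues that actually extend to global sections. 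Lemma \ref{lemma2} only moves the quasi-parabolic flag at $x$ (one step at a time, and subject to the isotropy constraints imposed by $\psi$); it does not change the underlying bundle $E$, hence does not change $R$. If $\dim R$ exceeds the codimension in the (symplectic) flag variety of the locus of flags compatible with a fixed nonzero $\rho$, then \emph{no} choice of flag avoids all of $R$, and your argument cannot conclude. Nothing in Proposition \ref{prop1} or Lemma \ref{lemma2} bounds $\dim R$, i.e.\ bounds $h^0(\textnormal{End}(E)(x))$ for the underlying vector bundle; that bound is precisely the missing input, and it is a statement about generic \emph{vector} bundles, not about generic flags. (A smaller inaccuracy: ``away from $x$, $\phi$ is forced to be a scalar by stability'' is not right as stated --- stability controls global holomorphic endomorphisms, i.e.\ the kernel of the residue map, not the restriction of $\phi$ to the open curve $X\setminus\{x\}$.)

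The paper's proof is much shorter and goes the other way around: since the weights are small, there is a forgetful morphism $f:\mathcal{M}_{\textnormal{Sp}}(2m,\alpha,L)\to\mathcal{M}(2m)$ to the moduli space of the underlying vector bundles; by \cite[Lemma 2.2]{BGM13} there is, for $g\ge 4$, a nonempty open set $U\subset\mathcal{M}(2m)$ of bundles $E$ with $H^0(\textnormal{End}(E)(x))$ as small as possible, and since $\textnormal{End}(E_*)\subset\textnormal{End}(E)$ the vanishing is inherited by every parabolic structure on such an $E$, i.e.\ on all of $f^{-1}(U)$. In other words, the genericity that does the work is genericity of the underlying bundle, which is exactly the ingredient your plan omits. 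If you want to salvage your route, you would first need to prove (or cite) the analogue of \cite[Lemma 2.2]{BGM13} bounding $h^0(\textnormal{End}(E)(x))$ for generic $E$ --- at which point the flag-variation argument becomes unnecessary.
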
	
	\begin{proof}
 	Let $f : \mathcal{M}_{\text{Sp}}(2m,\alpha,L) \to \mathcal{M}(2m)$ be the forgetful morphism sending a symplectic parabolic bundle to the underlying vector bundle. This morphism is well defined because of the small weights. For $g \geq 4$, there exists an open set $U \in \mathcal{M}(2m)$ such that for every $E \in U$ we have $H^0(\textnormal{End}(E)(x))=0$ \cite[Lemma 2.2]{BGM13}. Hence, for every $E_* \in f^{-1}(U)$, we have $H^0(\textnormal{End}(E_*)(x)) =0$.
	\end{proof}	
	
	\section{Hitchin discriminant}
	We will now describe the Hitchin map for symplectic parabolic Higgs bundles. An element of $\mathcal{N}_{\text{Sp}}(2m,\alpha,L)$ can be viewed as a stable parabolic bundle $E_*$ of rank $2m$ with a nondegenerate symplectic form $< , >$, together with a holomorphic section $\Phi \in H^0(X, \operatorname{End}(E_*) \otimes K(D))$ which satisfies
	\[
	<\Phi v,w> = - <v,\Phi w>.
	\]
	From the nondegeneracy of the symplectic form it follows that if $\mu$ is an eigenvalue then $-\mu$ is also an eigenvalue. Thus the characteristic polynomial of $\Phi$ is of the form
	\[
	\det(\lambda - \Phi) = \lambda^{2m} + s_2\lambda^{2m-2} + \cdots + s_{2m},
	\]
	where $s_{2i} \in H^0(X, K(D)^{2i})$ for all $1\leq i \leq m$. In fact, since $\Phi$ is strongly parabolic, the residue at each point of $D$ is nilpotent. Therefore, $s_{2i} \in H^0(X, K^{2i}D^{2i-1})$ for all $1 \leq i \leq m$. We define the \textit{Hitchin space} as
	\[
	W =  H^0(K^2D) \oplus H^0(K^4D^3) \oplus \dots \oplus H^0(K^{2m}D^{2m-1}).
	\]
Its dimension is $m(2m+1)(g-1) + m^2n$. The \textit{Hitchin map} 
	\[
	h : \mathcal{N}_{\textnormal{Sp}}(2m,\alpha,L) \longrightarrow W
	\]
	is defined by  $h(E_*,\Phi,\psi) = (s_2,s_4,\dots,s_{2m})$ where $s_{2i}$'s are the coefficients of the characteristic polynomial of $\Phi$. Also, we have the restriction map 
	\[
	h_0 : T^*\mathcal{M}_{\textnormal{Sp}}(2m,\alpha,L) \longrightarrow W.
	\]
	Let $\mathcal{S}$ denote the total space of  $K(D)$, let $p : \mathcal{S} \to X$ be the natural projection map, and let $t \in H^0(\mathcal{S},p^*K(D))$ be the tautological section. Given $s=(s_2,\dots, s_{2m}) \in W$, the \textit{spectral curve} $X_s$ in $\mathcal{S}$ is defined by the equation
	\[
	t^{2m} + s_2 t^{2m-2} + \cdots + s_{2m}=0,
	\]
	and it possesses an involution $\sigma(\eta) = -\eta $ (since all the exponents of $t$ are even). The involution $\sigma$ acts on the Jacobian $J(X_s)$ of the spectral curve. If $X_s$ is smooth, then the fiber $h^{-1}(s)$ is isomorphic to the Prym variety $\text{Prym}(X_s, X)=\{M \in J(X_s) : \sigma^*M \cong M^\vee \}$ \cite[Theorem 4.1]{R20}.\\
	Let $\mathcal{D} \subset W$ be the divisor consisting of the characteristic polynomials whose corresponding spectral curve is singular. The inverse image $h^{-1}(\mathcal{D})$ is called the \textit{Hitchin discriminant}. If a spectral curve is singular over a parabolic point $x \in D$, it is singular precisely at $(x,0)$. Consider the following subsets of $\mathcal{D}$:\\
	$(a)$ For each parabolic point $x \in D$, let $\mathcal{D}_x$ be the set of points whose spectral curve is singular over $x$.\\
	$(b)$ Let $\mathcal{D}_1$ be the set of points whose spectral curve is smooth over every $x\in D$ but singular at $(y,0)$, where $y \notin D$.\\
	$(c)$ Let $\mathcal{D}_2$ be the set of points whose spectral curve has two symmetrical nodes (i.e. $t^m + s_2t^{m-1} + \cdots + s_{2m-2}t + s_{2m} = 0$ has a node).\\
	Therefore, $\mathcal{D} = \underset{x \in D}{\bigcup}\mathcal{D}_x \cup \overline{\mathcal{D}_1} \cup \mathcal{D}_2$, where $\overline{\mathcal{D}_1}$ is the set of points whose spectral curve is singular at $(y,0)$ where $y \notin D$ (but not necessarily smooth over $D$).\\
	Let $\mathcal{D}_i^\mathrm{o} \subset \mathcal{D}_i, i=1,2$, denote the locus of the spectral curves which do not contain extra singularities.
	\begin{lemma}\label{lemma4}
		$\mathcal{D}_1$ and $\mathcal{D}_x$ are irreducible for every $x \in D$. 
	\end{lemma}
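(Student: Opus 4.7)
The plan is to reduce both irreducibility claims to properties of the last component $s_{2m}$ of $s=(s_2,\ldots,s_{2m})\in W$, via a local analysis of the spectral curve together with an incidence-variety argument for $\mathcal{D}_1$.

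First I would compute the defining equation of $X_s$ near $(x,0)$ and $(y,0)$ in appropriate local trivializations. Near a parabolic point $x\in D$, using a coordinate $z$ with $z(x)=0$ and the local generator $(dz/z)^{2i}$ of $K^{2i}(D^{2i-1})$, one can write $s_{2i}=z\,f_{2i}(z)\,(dz/z)^{2i}$ for some holomorphic $f_{2i}$; with $t=\eta\,dz/z$ the spectral equation takes the local form
\[
F(z,\eta)=\eta^{2m}+zf_{2}(z)\eta^{2m-2}+\cdots+zf_{2m}(z)=0.
\]
Only even powers of $\eta$ appear, so $\partial_\eta F\equiv 0$ along $\{\eta=0\}$, and $(x,0)$ is the unique point of $X_s$ lying over $x$. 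Hence $(x,0)$ is singular precisely when $\partial_z F(0,0)=f_{2m}(0)=s_{2m}(x)=0$, which shows that $\mathcal{D}_x=\{s\in W:s_{2m}(x)=0\}$ is the zero locus of a single nonzero linear functional on $W$, hence a linear subspace and therefore irreducible.

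For $\mathcal{D}_1$, the parallel computation near $y\notin D$ (using the trivialization $(dz)^{2i}$ of $K^{2i}$) shows that $(y,0)$ is a singular point of $X_s$ if and only if $s_{2m}$ vanishes to order at least two at $y$. I would then consider the incidence variety
\[
\mathcal{I}=\{(s,y)\in W\times(X\setminus D)\,:\,s_{2m}\text{ has a zero of order}\ge 2\text{ at }y\}
\]
and examine the projection $\pi_X:\mathcal{I}\to X\setminus D$. Its fiber over $y$ is cut out by the two linear conditions $s_{2m}(y)=0$ and $(ds_{2m})(y)=0$, i.e.\ the vanishing of the $1$-jet of $s_{2m}$ at $y$. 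Under the standing hypotheses $g\ge 4$, $m>1$, $n\ge 1$, the inequality $\deg K^{2m}(D^{2m-1})=2m(2g-2)+(2m-1)n\ge 2g+1$ holds, so by a Serre-duality computation the $1$-jet bundle of $K^{2m}(D^{2m-1})$ is globally generated. Therefore the fibers of $\pi_X$ are linear subspaces of constant codimension $2$ in $W$, and $\mathcal{I}$ is a vector bundle over the irreducible base $X\setminus D$; in particular $\mathcal{I}$ is irreducible. Its constructible image $\pi_W(\mathcal{I})\subset W$ is then irreducible (in its closure), and $\mathcal{D}_1$ is the intersection of $\pi_W(\mathcal{I})$ with the Zariski-open condition $\{s:s_{2m}(x)\ne 0\text{ for all }x\in D\}$. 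This intersection is nonempty: one can exhibit an $s_{2m}$ whose divisor has the form $2y+\Delta$ with $\Delta$ reduced and supported in $X\setminus(D\cup\{y\})$. Hence $\mathcal{D}_1$ is a nonempty open subset of an irreducible variety, and is itself irreducible.

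The only nontrivial point is the uniform codimension of the fibers of $\pi_X$, which rests on $1$-jet spannedness of $K^{2m}(D^{2m-1})$; this is immediate from the degree bound above.
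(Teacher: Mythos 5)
Your proof is correct and follows essentially the same route as the paper: both reduce the singularity conditions to linear conditions on $s_{2m}$, identifying $\mathcal{D}_x$ as a hyperplane in $W$ and $\mathcal{D}_1$ as a family, parametrized by $y\in X\setminus D$, of (open subsets of) codimension-two linear subspaces. Your incidence-variety formulation and the $1$-jet surjectivity check simply make rigorous the union over $y$ that the paper's treatment of $\mathcal{D}_1$ leaves implicit.
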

\begin{proof}
	  Suppose $s \in \mathcal{D}_x$, then the spectral curve is singular at $(x,0)$, i.e. $t=0$ and $s_{2m}$ has a double root at $x$. So $s_{2m} \in H^0(K^{2m}D^{2m-1}(-x))$. Therefore,
	 \[
	 \mathcal{D}_x = \bigoplus\limits_{i=1}^{m-1}H^0(K^{2i}D^{2i-1}) \oplus H^0(K^{2m}D^{2m-1}(-x))
	 \]
	 is irreducible for all $x \in D$. \\
	 Consider $s\in \mathcal{D}_1$, i.e. the spectral curve is singular at $(y,0)$, where $y \notin D$ but smooth over every $x \in D$. So
	 \[
	 s_{2m} \in H^0(K^{2m}D^{2m-1}(-2y)) \setminus \bigcup_{x\in D} H^0(K^{2m}D^{2m-1}(-2y -x)) \coloneqq \mathcal{H}_y
	 \]
	 Hence, we get that
	 \[
	 \mathcal{D}_1 = \bigoplus\limits_{i=1}^{m-1}H^0(K^{2i}D^{2i-1}) \oplus \mathcal{H}_y.
	 \]
	 By Riemann-Roch theorem, the last summand $\mathcal{H}_y$ is the complement of an hyperplane in $H^0(K^{2m}D^{2m-1}(-2y))$. Therefore, $\mathcal{D}_1$ is irreducible. 
	\end{proof}

	\begin{prop}\label{prop2}
		Let $Y$ be an integral curve which has a unique simple node. Also assume that $Y$ possess an involution $\sigma$. Let $\pi_Y : \tilde{Y} \to Y$ be the normalization. Then the compactified Jacobian $\bar{J}(Y)$ is birational to a $\mathbb{P}^1$-fibration over $J(\Tilde{Y})$. \\
		Analogously, let $Y$ be an integral curve with two simple nodes and possess an involution $\sigma$ which interchanges these two nodes, and let $\tilde{Y}$ be the normalization of $Y$. Then $\bar{J}(Y)$ is birational to a $\mathbb{P}^1 \times \mathbb{P}^1$-bundle on $J(\tilde{Y})$.\\
		And in either case the Prym variety, which is the fixed point variety of the involution, is an uniruled variety.
	\end{prop}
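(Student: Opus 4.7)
The plan is to describe $\bar{J}(Y)$ via the normalization $\pi_Y:\tilde{Y}\to Y$ and then track the action of the involution through this description. For an integral nodal curve with $\delta$ nodes, the generalized Jacobian fits into an extension
\[
1 \longrightarrow (\mathbb{G}_m)^{\delta} \longrightarrow J(Y) \longrightarrow J(\tilde{Y}) \longrightarrow 1,
\]
with each $\mathbb{G}_m$-factor parametrizing the gluing data at one node. The compactified Jacobian $\bar{J}(Y)$ is obtained by allowing the gluing parameter at each node to degenerate to $0$ or $\infty$, that is, by including the torsion-free rank-one sheaves which are not locally free. This birationally replaces each $\mathbb{G}_m$ by $\mathbb{P}^1$, so $\bar{J}(Y)$ is birational to a $(\mathbb{P}^1)^{\delta}$-fibration over $J(\tilde{Y})$. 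Setting $\delta=1$ gives the statement of the first paragraph and $\delta=2$ gives the second.

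For the final assertion, I analyze the involution $\iota:M\mapsto(\sigma^*M)^{\vee}$ on $\bar{J}(Y)$. It is compatible with the $(\mathbb{P}^1)^{\delta}$-fibration and covers on $J(\tilde{Y})$ the involution $\tilde{L}\mapsto(\sigma^*\tilde{L})^{\vee}$, whose fixed locus contains $\operatorname{Prym}(\tilde{Y})$ as an abelian subvariety. Restricted to a fiber above a point of $\operatorname{Prym}(\tilde{Y})$, $\iota$ acts as an involution of $\mathbb{P}^1$ (resp.\ $\mathbb{P}^1\times\mathbb{P}^1$), which I compute using local trivializations at the preimages of the node(s) in $\tilde{Y}$.

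In the one-node case the node lies on the zero section of $\mathcal{S}$, so $\sigma(\eta)=-\eta$ swaps the two branches $p_1,p_2$; the contribution of dualization to the gluing parameter $\lambda\in\mathbb{G}_m$ is $\lambda\mapsto\lambda^{-1}$, and the contribution of $\sigma^*$ is another $\lambda\mapsto\lambda^{-1}$ because of the swap of $p_1$ and $p_2$, so the two inversions cancel and $\iota$ acts trivially on the $\mathbb{P}^1$-fiber. Hence the fixed locus of $\iota$ contains a $\mathbb{P}^1$-fibration over (a component of) $\operatorname{Prym}(\tilde{Y})$, making the Prym of $Y$ uniruled. In the two-node case $\sigma$ interchanges the two nodes, so on a $\mathbb{P}^1\times\mathbb{P}^1$-fiber $\iota$ acts (after the cancelling dualization/pullback at each individual node) by swapping the two factors; the fixed locus of such a swap is a diagonal $\mathbb{P}^1$, so the Prym again contains a $\mathbb{P}^1$-subfibration over $\operatorname{Prym}(\tilde{Y})$ and is uniruled.

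The main technical issue will be verifying that the fiberwise action of $\iota$ is genuinely trivial (rather than $\lambda\mapsto-\lambda$ with two isolated fixed points), since the choice of isomorphism $(\sigma^*\tilde{L})^{\vee}\cong\tilde{L}$ for $\tilde{L}\in\operatorname{Prym}(\tilde{Y})$ can in principle rescale the gluing parameter by a sign. The cleanest way to settle this is to check the statement at the identity, where $\tilde{L}=\mathcal{O}_{\tilde{Y}}$ admits a canonical trivialization and the computation above makes the action on the fiber manifestly trivial, and then to extend by connectedness to the whole component of $\operatorname{Prym}(\tilde{Y})$ containing the origin.
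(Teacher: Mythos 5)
Your proposal is correct and follows essentially the same route as the paper: both describe $\bar{J}(Y)$ birationally as a $(\mathbb{P}^1)^{\delta}$-fibration over $J(\tilde{Y})$ via the gluing data at the node(s) (your semiabelian-extension picture is the same construction as the paper's description of points of $P$ as one-dimensional quotients of $L_x\oplus L_z$, following Bhosle), and both obtain uniruledness of the Prym by showing the induced involution fixes the $\mathbb{P}^1$-fiber pointwise in the one-node case and acts by swapping the two factors (fixed locus a diagonal $\mathbb{P}^1$) in the two-node case. Your explicit flagging of the sign/normalization issue in the fiberwise action is a point the paper glosses over, and your resolution (check at $\tilde{L}=\mathcal{O}_{\tilde{Y}}$, where the two inversions visibly cancel) is sound.
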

\begin{proof}
The proof is same as in \cite{BGM12}. For convenience of the reader, we give details of the proof. Suppose $Y$ has a simple node at $y$, and let $x$ and $z$ be the preimages of $y$ in $\tilde{Y}$. Let $P$ be a $\mathbb{P}^1$-fibration over $J(\tilde{Y})$, whose fiber over any point $L \in J(\tilde{Y})$ is $\mathbb{P}^1(L_{x} \oplus L_{z})$. We claim that the compactified Jacobian $\bar{J}(Y)$ is birational to $P$. Let us construct a morphism $P \to \bar{J}(Y)$ as follows: a point of $P$ corresponds to a line bundle $L \in  J(\tilde{Y})$ and an one dimensional quotient $q : L_{x} \oplus L_{z} \twoheadrightarrow \mathbb{C}$ (up to a scalar multiple). The image $L'$ of $L$ under this morphism is defined as
\[
0 \rightarrow L' \rightarrow (\pi_Y)_*L \overset{q}\rightarrow \mathbb{C}_y \rightarrow 0.
\]
For the details of the proof, see \cite[Theorem 4]{B96}.\\ 
The involution $\sigma$ lifts to an involution $\tilde{\sigma}$ of $\tilde{Y}$, which will induce an involution in $P$ as follows: suppose $(L,q)$ is a point of $P$, and the quotient $q : L_{x} \oplus L_{z} \twoheadrightarrow \mathbb{C}$ is represented by $[a : b]$. Then the induced involution sends the point $(L,q)$ to $(\tilde\sigma^*L^\vee, q^\vee := [b:a])$. The definition of $q^\vee$ makes sense:  if $[a : b] \in \mathbb{P}(L_{x} \oplus L_{z})$, then 
\[
[b : a] \in \mathbb{P}(L_{z} \oplus L_{x}) = \mathbb{P}(L_{x}^\vee \oplus L_{z}^\vee).
\]
Therefore, the involution on $P$ induces an involution in $\bar{J}(Y)$, which restricts to $M \mapsto \sigma^*M^\vee$ on $J(Y)$. The Prym variety $\textnormal{Prym}(Y,\sigma) = \{M \in J(Y): \sigma^*M\cong M^\vee\} \subset \bar{J}(Y)$ is an uniruled variety since we have a surjective morphism from the $\mathbb{P}^1$-fibration $P|_{\text{Prym}}$ defined by the pullback
	\[
	\begin{tikzcd}
	P|_{\text{Prym}} \arrow{r}{} \arrow[swap]{d}{} & P \arrow{d}{} \\%
	\textnormal{Prym}(\tilde{Y},\tilde{\sigma}) \arrow{r}{}& J(\tilde{Y})
	\end{tikzcd}
	\]
Analogously, if $Y$ is an integral curve with two simple nodes $y_1$ and $y_2$ and $\pi_Y^{-1}(y_i) = \{x_i,z_i\}$, then $\bar{J}(Y)$ is birational to a $\mathbb{P}^1 \times \mathbb{P}^1$-bundle $P$ on $J(\tilde{Y})$, where the two $\mathbb{P}^1$-factors correspond to the quotients $q_1 : L_{x_1} \oplus L_{z_1} \twoheadrightarrow \mathbb{C}$ and $q_2 : L_{x_2} \oplus L_{z_2} \twoheadrightarrow \mathbb{C}$.\\
The induced involution on $P$ sends $(L,q_1,q_2)$ to $(\tilde{\sigma}^*L^\vee, q_2^\vee, q_1^\vee)$, and then induces an involution in $\bar{J}(Y)$. A fixed point $(L,q_1,q_2)$ in $P$ for this involution has $L \cong \tilde{\sigma}^*L^\vee$ and $q_2 = q_1^\vee$, therefore the fixed point variety is a $\mathbb{P}^1$-fibration on $\textnormal{Prym}(\tilde{Y},\tilde{\sigma})$, and the image under the birational map is the fixed point locus on $\bar{J}(Y)$, denoted by $\textnormal{Prym}(Y,\sigma)$. So this Prym variety is also an uniruled variety.  
\end{proof}
	
	\begin{prop}\label{prop3}
		Let $g \geq 4$. The following statements hold for the Hitchin map $h_0 : T^*\mathcal{M}_{\textnormal{Sp}}(2m,\alpha,L) \to W$: \\
		(a) For $s \in W -\mathcal{D}$, the fiber $h_0^{-1}(s)$ is an open subset of an abelian variety.\\
		(b) For $s \in \mathcal{D}_i^\mathrm{o}$  $(i = 1,2)$, the fiber  $h_0^{-1}(s)$ is an open subset of an uniruled variety.\\
		(c) For a generic $s \in \mathcal{D}_x$, the fiber  $h_0^{-1}(s)$ contains a complete rational curve.
	\end{prop}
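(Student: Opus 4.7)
The plan is to analyze each fiber $h_0^{-1}(s)$ case by case, using the spectral correspondence (Theorem 4.1 of [R20] cited in the excerpt) together with the geometric classification of the singularities of the spectral curve $X_s$.

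For part (a), when $s \in W - \mathcal{D}$, the spectral curve $X_s$ is smooth, and the cited spectral correspondence gives $h^{-1}(s) \cong \textnormal{Prym}(X_s, X) = \{M \in J(X_s) : \sigma^*M \cong M^\vee\}$, which is an abelian subvariety of $J(X_s)$. Since $T^*\mathcal{M}_{\textnormal{Sp}}(2m,\alpha,L)$ is Zariski open in $\mathcal{N}_{\textnormal{Sp}}(2m,\alpha,L)$ (it consists of those Higgs pairs whose underlying parabolic bundle is stable), the intersection $h_0^{-1}(s) = h^{-1}(s) \cap T^*\mathcal{M}_{\textnormal{Sp}}(2m,\alpha,L)$ is an open subset of this abelian variety.

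For part (b), when $s \in \mathcal{D}_i^\mathrm{o}$ ($i = 1, 2$), the spectral curve $X_s$ is an integral curve whose only singularities are one node (in case $i=1$) or two symmetric nodes interchanged by $\sigma$ (in case $i=2$). I would extend the spectral correspondence to this singular setting: Higgs pairs with spectral curve $X_s$ correspond to torsion-free rank-$1$ sheaves on $X_s$, and the anti-symmetry condition forces these sheaves to be fixed by the involution $M \mapsto \sigma^*M^\vee$ on the compactified Jacobian $\bar{J}(X_s)$. Thus $h^{-1}(s)$ is identified with the Prym variety $\textnormal{Prym}(X_s,\sigma) \subset \bar{J}(X_s)$. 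By Proposition \ref{prop2}, this Prym variety is uniruled, so $h_0^{-1}(s)$ is open in an uniruled variety.

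For part (c), the spectral curve $X_s$ has its unique singularity at $(x,0)$ where $x \in D$ is a parabolic point. Here the strong parabolicity condition forces $\textnormal{Res}_x \Phi$ to be nilpotent, and the local behaviour at $(x,0)$ is governed by the interplay between this nilpotent residue and the parabolic flag rather than by pure compactified-Jacobian geometry. The strategy is to fix a generic pair $(E_*,\Phi) \in h_0^{-1}(s)$ and to construct a one-parameter family of Hecke modifications of $(E_*,\Phi)$ at $x$ which are compatible with the symplectic form, preserve the nilpotent-residue stratum of $\Phi$, and leave the characteristic polynomial equal to $s$. Such modifications are parametrised by a projective variety arising from the isotropic flag geometry at $x$, and a suitable one-parameter subfamily yields a complete $\mathbb{P}^1$. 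Lemma \ref{lemma2} ensures that the stability of the underlying parabolic bundle is preserved along this family, so the curve lies entirely inside $h_0^{-1}(s)$.

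The main obstacle is part (c): producing an explicit complete rational curve inside the fiber requires a careful local analysis at $(x,0)$ showing that the Hecke modifications can be chosen to respect simultaneously the symplectic form, the strong parabolicity, and the fixed characteristic polynomial; this contrasts with (a) and (b), where everything follows from the spectral correspondence together with Proposition \ref{prop2}.
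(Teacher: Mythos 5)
Parts (a) and (b) of your proposal follow the paper's route: (a) is immediate from the spectral correspondence of [R20], and (b) reduces to Proposition \ref{prop2} via the identification of the fiber with the Prym variety inside the compactified Jacobian of the nodal spectral curve. One omission in (b): the paper also invokes the estimate (following Faltings) that $\mathcal{N}_{\textnormal{Sp}}(2m,\alpha,L) - T^*\mathcal{M}_{\textnormal{Sp}}(2m,\alpha,L)$ has codimension at least $3$, so that $h^{-1}(s) - h_0^{-1}(s)$ has codimension at least $2$ in $h^{-1}(s)$; this is what guarantees that $h_0^{-1}(s)$ is a dense, in particular nonempty, open subset of the uniruled variety (and, for the later use in Proposition \ref{prop4}, that the rational curves of the ruling actually survive inside $h_0^{-1}(s)$). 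You should not skip it.

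Part (c) is where you have a genuine gap, and you say so yourself. Your plan --- Hecke modifications of $(E_*,\Phi)$ at $x$ compatible with the symplectic form --- points in the wrong direction: a Hecke modification changes the underlying bundle $E$ (in particular its degree), so the resulting objects leave the moduli space altogether, and no amount of local analysis at $(x,0)$ will fix that. The paper's construction keeps $E$ and $\Phi$ completely fixed and varies only one step of the quasi-parabolic flag at $x$. Concretely: for $(E_*,\Phi,\psi)$ over a generic $s\in\mathcal{D}_x$, one first shows that the evaluation map $H^0(\textnormal{End}(E_*)\otimes K(D))\to \textnormal{End}(E_*)\otimes K(D)|_x$ is surjective --- this is exactly where Proposition \ref{prop1} ($(1,0)$-stability) and Lemma \ref{lemma3} ($H^0(\textnormal{End}(E_*)(x))=0$), which you never use, enter --- and deduces that the condition $z^2\mid\det\Phi(z)$ forces a vanishing entry at some position $(k-1,k)$, i.e.\ $\Phi$ maps $E_{x,k-1}$ into $E_{x,k+1}$. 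Then every intermediate subspace $E_{x,k-1}\supsetneq E'_{x,k}\supsetneq E_{x,k+1}$ yields a strongly parabolic Higgs field for the modified flag; since the flags are full these choices form a $\mathbb{P}^1$, Lemma \ref{lemma2} guarantees all the modified parabolic bundles are stable, and $h$ depends only on $(E,\Phi)$, so the whole $\mathbb{P}^1$ sits in the single fiber $h_0^{-1}(s)$. This flag-variation idea, together with the surjectivity of the evaluation map that justifies it for generic $s$, is the missing content of your part (c).
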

	\begin{proof}
		The Hitchin map $h : \mathcal{N}_{\text{Sp}}(2m,\alpha,L) \to W$ is proper (see \cite{M94} for details). By \cite[Theorem, 4.1]{R20}, $h^{-1}(s)$ is an open subset of an abelian variety (a Prym variety) for $s \in W - \mathcal{D}$.\\
		The complement $\mathcal{N}_{\text{Sp}}(2m,\alpha,L) - T^*\mathcal{M}_{\text{Sp}}(2m,\alpha,L)$ has codimension $\geq 3$ (following the computations in Faltings \cite[Theorem II.6(iii)]{F93}, for $g \geq 4$). Therefore $(\mathcal{N}_{\text{Sp}}(2m,\alpha,L) - T^*\mathcal{M}_{\text{Sp}}(2m,\alpha,L)) \cap h^{-1}(\mathcal{D}_i)$ is of codimension at least $2$ in $h^{-1}(\mathcal{D}_i)$, so for $s\in \mathcal{D}_i^\mathrm{o}$, we get that
		\[
		h^{-1}(s) - h_0^{-1}(s) \subset h^{-1}(s)
		\]
		has codimension at least two. Therefore, by Proposition \ref{prop2}, $h_0^{-1}(s)$ is an open subset of an uniruled variety.\\
		It remains to prove part $(c)$, that a generic fiber over $\mathcal{D}_x$ contains a complete rational curve. The idea of the proof is same as in \cite[Proposition 4.2]{AG19}. Let $V \subset \mathcal{M}_{\textnormal{Sp}}(2m,\alpha,L)$ denote the intersection of open subsets defined by Proposition \ref{prop1} and Lemma \ref{lemma3}, i.e. $V$ consists of $(1,0)$-stable symplectic parabolic bundles $(E_*,\psi)$ such that $H^0(\textnormal{End}(E_*)(x))=0$. Thus, for all $(E_*,\psi) \in V$ and all $x \in D$, we have
     \[
     H^1(\textnormal{End}(E_*)\otimes K(D-x))= H^0(\textnormal{End}(E_*)(x))^\vee = 0.
     \]
		Therefore the evaluation map
		\[
		\text{ev} : H^0(\textnormal{End}(E_*)\otimes K(D)) \to \textnormal{End}(E_*)\otimes K(D)|_x
		\]
		is surjective.\\
		For $1 < k \leq 2m$, consider the subspace $N_k(E_*) \subset \textnormal{End}(E_*)\otimes K(D)|_x$ whose elements are matrices with a zero at $(k-1,k)$. For $k=1$, the subspace $N_1(E_*)$ consists of matrices with a zero at $(2m,1)$. Let $\tilde{N}_k(E_*)$ be the preimage of $N_k(E_*)$ under the evaluation map. For $k>1$, we can describe $\tilde{N}_k(E_*)$ as follows : consider the subfiltraion of $E|_x$ obtained by removing the subspace $E_{x,k}$. Denote the parabolic bundle with this new filtration by $E_{*_k}$. Then
		\[
		\tilde{N}_k(E_*) = H^0(\textnormal{End}(E_{*_k}) \otimes K(D)).
		\]
	 Let $(E_*,\Phi,\psi) \in h^{-1}(\mathcal{D}_x)\cap T^*V$. We can describe the Higgs field $\Phi$ in a basis corresponding to the parabolic filtration as 
	 \[
	 \Phi(z) = \begin{bmatrix} za_{1,1} & a_{1,2} & \cdots & a_{1,2m} \\ za_{2,1} & za_{2,2} & \cdots & a_{2,2m} \\ \vdots & \vdots & \ddots & \vdots \\ za_{2m,1} & za_{2m,2} & \cdots & za_{2m,2m} \end{bmatrix}
	 \]
	 where $a_{i,j}$ are local sections of $K(D)$. Then $(E_*,\Phi,\psi) \in h^{-1}(\mathcal{D}_x)$ is and only if $z^2 | \det(\Phi(z))$. The only summand of the determinant that is not a multiple of $z^2$ is $za_{2m,1}a_{1,2}a_{2,3}\cdots a_{2m-1,2m}$. Therefore, $z^2|\det(\Phi(z))$ if and only if $\text{ev}(\Phi) \in N_k(E_*)$ for some $1< k \leq 2m$. Since the evaluation map is surjective we conclude that
		\[
		h^{-1}(\mathcal{D}_x) \cap T_{E_*}^*\mathcal{M}_{\textnormal{Sp}}(2m,\alpha,L) = \bigcup_{k=1}^{2m}\tilde{N}_k(E_*)
		\]
		By Lemma \ref{lemma2}, for all $E_* \in V$, for all $x \in D$, all $1< k \leq 2m$ and all $E_{x,k}'$ such that  $E_{x,k-1} \supsetneq E_{x,k}' \supsetneq E_{x,k+1}$, we know that $E_{*'}$ is stable. Since $\Phi$ sends $E_{x,k-1}$ to $E_{x,k+1}$, we have that $\Phi \in H^0(\textnormal{End}(E_{*'})\otimes K(D))$ for all $E_{x,k}'$. Therefore $E_{*'} \in h_0^{-1}(\mathcal{D}_x)$ for all such $E_{x,k}'$. Since $E$ and $\Phi$ remain the same, all those Higgs bundles lie over the same point. The space of possible compatible steps form a complete rational curve.
	\end{proof}

	\begin{prop}\label{prop4}
		Let $\mathcal{C}\subset T^*\mathcal{M}_{\textnormal{Sp}}(2m,\alpha,L)$ be the union of the (complete) rational curves in $T^*\mathcal{M}_{\textnormal{Sp}}(2m,\alpha,L)$. Then $\mathcal{D}$ is the closure of $h_0(\mathcal{C}) \subset W$. 
	\end{prop}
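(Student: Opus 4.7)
The plan is to prove the two inclusions $h_0(\mathcal{C}) \subset \mathcal{D}$ and $\mathcal{D} \subset \overline{h_0(\mathcal{C})}$ separately, then combine them, noting that $\mathcal{D}$ is closed (being a divisor), so the first inclusion passes to closures.

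For the first inclusion, the key observation is that $W$, as a direct sum of the vector spaces $H^0(K^{2i}D^{2i-1})$, is an affine space $\mathbb{A}^N$. Therefore any morphism $\mathbb{P}^1 \to W$ is constant, so every complete rational curve $C \subset T^*\mathcal{M}_{\textnormal{Sp}}(2m,\alpha,L)$ is contracted by $h_0$ to a single point $s$, and $C \subset h_0^{-1}(s)$. If $s$ were in $W \setminus \mathcal{D}$, Proposition \ref{prop3}(a) would make $h_0^{-1}(s)$ an open subset of an abelian variety, which admits no non-constant map from $\mathbb{P}^1$. Hence $s \in \mathcal{D}$, giving $h_0(\mathcal{C}) \subset \mathcal{D}$.

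For the reverse inclusion, I use the decomposition $\mathcal{D} = \bigcup_{x \in D} \mathcal{D}_x \cup \overline{\mathcal{D}_1} \cup \mathcal{D}_2$ and handle the pieces one at a time. Each $\mathcal{D}_x$ is covered directly by Proposition \ref{prop3}(c), which produces a complete rational curve in $h_0^{-1}(s)$ for generic $s \in \mathcal{D}_x$; combined with the irreducibility of $\mathcal{D}_x$ from Lemma \ref{lemma4}, this yields $\mathcal{D}_x \subset \overline{h_0(\mathcal{C})}$. The remaining work is for $\mathcal{D}_1^\circ$ and $\mathcal{D}_2^\circ$, whose closures exhaust $\overline{\mathcal{D}_1}$ and $\mathcal{D}_2$ respectively, since they are dense Zariski open subsets (using Lemma \ref{lemma4} for the $\mathcal{D}_1$ case).

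The main obstacle is that Proposition \ref{prop3}(b) provides only that $h_0^{-1}(s)$ is an open subset of an uniruled variety for $s \in \mathcal{D}_i^\circ$, whereas I need an honest complete $\mathbb{P}^1$ inside $h_0^{-1}(s)$. To handle this, I invoke the explicit geometric description from Proposition \ref{prop2}: the Prym variety $\textnormal{Prym}(X_s,\sigma)$ contains a $\mathbb{P}^1$-fibration over $\textnormal{Prym}(\tilde X_s, \tilde\sigma)$ (in the $\mathcal{D}_2^\circ$ case, arising as the fixed-point locus of the induced involution on the $\mathbb{P}^1 \times \mathbb{P}^1$-bundle). Since $h^{-1}(s) \setminus h_0^{-1}(s)$ has codimension at least two in $h^{-1}(s)$, the base locus of $\mathbb{P}^1$-fibers meeting this complement has dimension strictly less than that of the base of the fibration; hence a generic $\mathbb{P}^1$-fiber lies entirely inside $h_0^{-1}(s)$. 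This produces the required complete rational curves, showing $\mathcal{D}_i^\circ \subset h_0(\mathcal{C})$, and taking closures concludes the proof.
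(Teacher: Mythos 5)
Your proposal is correct and follows the same overall strategy as the paper: complete rational curves are contracted by $h_0$ (since $W$ is affine), they cannot land in fibres over $W\setminus\mathcal{D}$ because those are open subsets of abelian varieties, and density of $h_0(\mathcal{C})$ in $\mathcal{D}$ is obtained componentwise from the decomposition $\mathcal{D}=\bigcup_{x\in D}\mathcal{D}_x\cup\overline{\mathcal{D}_1}\cup\mathcal{D}_2$, after which closedness of $\mathcal{D}$ finishes the argument. The one place where you genuinely go beyond the paper is the treatment of $\mathcal{D}_1^{\mathrm{o}}$ and $\mathcal{D}_2^{\mathrm{o}}$: the paper's proof simply cites Proposition \ref{prop3} as saying that the generic fibre over $\mathcal{D}_i^{\mathrm{o}}$ \emph{contains a complete rational curve}, whereas part (b) of that proposition literally only asserts that $h_0^{-1}(s)$ is an open subset of a uniruled variety, which by itself does not produce a complete $\mathbb{P}^1$ inside the open subset. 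Your repair --- going back to the explicit $\mathbb{P}^1$-fibration $P|_{\mathrm{Prym}}\to\mathrm{Prym}(\tilde X_s,\tilde\sigma)$ of Proposition \ref{prop2} and using that $h^{-1}(s)\setminus h_0^{-1}(s)$ has codimension at least two to conclude that a generic fibre of that fibration misses the complement --- is exactly the right way to close this gap, and is implicit in how the paper uses the codimension estimate in the proof of Proposition \ref{prop3}(b). Two small caveats: the birational map $P\to\bar J(X_s)$ could in principle contract or fail to be defined on some $\mathbb{P}^1$-fibres, so you should say that a \emph{generic} fibre maps isomorphically onto a complete rational curve in $h^{-1}(s)$; and the density of $\mathcal{D}_2^{\mathrm{o}}$ in $\mathcal{D}_2$ requires that the generic point of each component of $\mathcal{D}_2$ has no extra singularities --- a genericity statement the paper also takes for granted, so you are no worse off than the source.
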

	\begin{proof}
		Let $l \cong \mathbb{P}^1 \subset T^*\mathcal{M}_{\text{Sp}}(2m,\alpha,L)$ be a complete rational curve. So $h_0(l) \subset W$ is a point as it is a complete curve. Therefore $l$ is contained in a fiber of the Hitchin map. By Proposition \ref{prop3}, fiber over $s \in W - \mathcal{D}$ is an open subset of an abelian variety, so $l$ cannnot be contained in a fiber over $W-\mathcal{D}$.\\
		Again by Proposition \ref{prop3}, we know that generic fiber over $\mathcal{D}_x$ and $\mathcal{D}_i^\mathrm{o},i= 1,2$, contain a complete rational curve.  Therefore, $h_0(\mathcal{C})$ is dense in $\mathcal{D}$. Since $\mathcal{D}\subset W$ is closed, we get that $\mathcal{D}= \overline{h_0(\mathcal{C})}$.
	\end{proof}
\section{Torelli theorem for symplectic parabolic bundles}
	\begin{prop}\label{prop5}
		The global algebraic functions $\Gamma(T^*\mathcal{M}_{\textnormal{Sp}}(2m,\alpha,L))$ produce a map
		\[
		\tilde{h} : T^*\mathcal{M}_{\textnormal{Sp}}(2m,\alpha,L) \longrightarrow \mathrm{Spec}(\Gamma(T^*\mathcal{M}_{\textnormal{Sp}}(2m,\alpha,L))) \cong W \cong \mathbb{C}^N,
		\]
		which is the Hitchin map upto an automorphism of $\mathbb{C}^N$, where $N=\dim W =\dim\mathcal{M}_{\textnormal{Sp}}(2m,\alpha,L)$.
		Moreover, if we consider the standard dilation action of $\mathbb{C}^*$ on the fibers of the cotangent bundle $T^*\mathcal{M}_{\textnormal{Sp}}(2m,\alpha,L)$, then there is a unique $\mathbb{C}^*$-action on $W$ such that $\tilde{h}$ is $\mathbb{C}^*$-equivariant, i.e. $\tilde{h}(E_*, \lambda\Phi)= \lambda \cdot \tilde{h}(E_*,\Phi)$.
	\end{prop}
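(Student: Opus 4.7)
My plan is to identify $\Gamma(T^*\mathcal{M}_{\textnormal{Sp}}(2m,\alpha,L))$ with the coordinate ring of the Hitchin space $W$ by showing that pullback along $h_0$ is an isomorphism of rings. The argument divides naturally into three parts.

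First, I would extend global functions from $T^*\mathcal{M}_{\textnormal{Sp}}(2m,\alpha,L)$ to the full moduli space $\mathcal{N}_{\textnormal{Sp}}(2m,\alpha,L)$. Since $\mathcal{N}_{\textnormal{Sp}}(2m,\alpha,L)$ is smooth and the complement of $T^*\mathcal{M}_{\textnormal{Sp}}(2m,\alpha,L)$ has codimension at least three (the Faltings-type estimate already invoked in the proof of Proposition~\ref{prop3}), Hartogs' theorem provides a natural isomorphism
\[
\Gamma(T^*\mathcal{M}_{\textnormal{Sp}}(2m,\alpha,L)) \;\xrightarrow{\sim}\; \Gamma(\mathcal{N}_{\textnormal{Sp}}(2m,\alpha,L)).
\]

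Next, I would compute $\Gamma(\mathcal{N}_{\textnormal{Sp}}(2m,\alpha,L))$ using the extended Hitchin map $h$. Recall that $h$ is proper (Markman \cite{M94}) and surjective, $\mathcal{N}_{\textnormal{Sp}}$ is irreducible, and by Proposition~\ref{prop3}(a) the generic fiber is an open subset of a Prym variety and hence connected. Applying Stein factorization to $h$ produces a finite morphism $W' \to W$; the connectedness of the generic fiber forces this to be birational, and the smoothness (hence normality) of $W$ then forces it to be an isomorphism by Zariski's main theorem. Thus $h_*\mathcal{O}_{\mathcal{N}_{\textnormal{Sp}}} = \mathcal{O}_W$, and taking global sections yields the desired isomorphism $h^* : \Gamma(W) \xrightarrow{\sim} \Gamma(\mathcal{N}_{\textnormal{Sp}}(2m,\alpha,L))$. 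Because $W$ is a vector space of dimension $N$, this gives $\mathrm{Spec}(\Gamma(T^*\mathcal{M}_{\textnormal{Sp}}(2m,\alpha,L))) \cong W \cong \mathbb{A}^N$, and under this identification $\tilde{h}$ coincides with $h_0$; any other identification with $\mathbb{C}^N$ differs by an automorphism of affine space, which is precisely the content of the first assertion.

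Finally, for the $\mathbb{C}^*$-equivariance, the dilation action $\lambda\cdot(E_*,\Phi)=(E_*,\lambda\Phi)$ acts by functoriality on $\Gamma(T^*\mathcal{M}_{\textnormal{Sp}}(2m,\alpha,L))$ and hence on its spectrum. Since the characteristic polynomial of $\lambda\Phi$ equals $\mu^{2m}+\lambda^{2}s_{2}\mu^{2m-2}+\cdots+\lambda^{2m}s_{2m}$, the induced action on $W$ is the weighted dilation $s_{2i}\mapsto\lambda^{2i}s_{2i}$. Uniqueness is then immediate: since $\tilde{h}$ is surjective, any $\mathbb{C}^*$-action on $\mathrm{Spec}(\Gamma(T^*\mathcal{M}_{\textnormal{Sp}}(2m,\alpha,L)))$ making $\tilde{h}$ equivariant is determined pointwise by its values on the image of $\tilde{h}$. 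I expect the only genuine obstacle to be the identification $h_*\mathcal{O}_{\mathcal{N}_{\textnormal{Sp}}} = \mathcal{O}_W$ in the second step; once this has been established, everything else is formal.
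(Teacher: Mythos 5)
Your proposal is correct and follows essentially the same route as the paper: Hartogs extension across the small-codimension complement to identify $\Gamma(T^*\mathcal{M}_{\textnormal{Sp}}(2m,\alpha,L))$ with $\Gamma(\mathcal{N}_{\textnormal{Sp}}(2m,\alpha,L))$, properness and connectedness of the fibers of $h$ to identify the latter with $\Gamma(W)$, and then the induced weighted dilation action on $W$. The only difference is one of presentation: where you invoke Stein factorization and Zariski's main theorem to establish $h_*\mathcal{O}_{\mathcal{N}_{\textnormal{Sp}}(2m,\alpha,L)}=\mathcal{O}_W$, the paper argues directly that holomorphic functions are constant on the compact connected fibers and hence factor through the affine base $W$ (the connectedness of all fibers being exactly the Stein/Zariski argument you make explicit).
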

	\begin{proof}
		The Hitchin map
		\[
		h : \mathcal{N}_{\textnormal{Sp}}(2m,\alpha,L) \longrightarrow W
		\]
		is proper and the generic fibers are compact and connected - a Prym variety. Since $W$ is smooth, every fibre is compact and connected. Therefore any holomorphic function $f : \mathcal{N}_{\textnormal{Sp}}(2m,\alpha,L) \to \mathbb{C}$ is constant on each fiber and so $f$ factors through $W$ and, as $W$ is affine, we get that
		\[
		\mathrm{Spec}(\Gamma(\mathcal{N}_{\textnormal{Sp}}(2m,\alpha,L))) \cong \mathrm{Spec}(\Gamma(W)) \cong W.
		\]
		Let  $g : T^*\mathcal{M}_{\textnormal{Sp}}(2m,\alpha,L) \to \mathbb{C}$ be a holomorphic function. Since the codimension of $T^*\mathcal{M}_{\textnormal{Sp}}(2m,\alpha,L) \subset \mathcal{N}_{\textnormal{Sp}}(2m,\alpha,L)$ is at least two, and $\mathcal{N}_{\textnormal{Sp}}(2m,\alpha,L)$ is smooth, by Hartog's theorem $g$ extends to a holomorphic function $\tilde{g} : \mathcal{N}_{\textnormal{Sp}}(2m,\alpha,L) \to \mathbb{C}$. Therefore, we get \[\Gamma(T^*\mathcal{M}_{\textnormal{Sp}}(2m,\alpha,L)) \cong \Gamma(\mathcal{N}_{\textnormal{Sp}}(2m,\alpha,L)).
		\] 
		Hence, we obtain a map
		\[
		\tilde{h} : T^*\mathcal{M}_{\textnormal{Sp}}(2m,\alpha,L) \longrightarrow \mathrm{Spec}(\Gamma(T^*\mathcal{M}_{\textnormal{Sp}}(2m,\alpha,L))) \cong W,
		\]
		and the $\mathbb{C}^*$- action on the cotangent bundle $T^*\mathcal{M}_{\textnormal{Sp}}(2m,\alpha,L)$ then induces a unique action on $\mathrm{Spec}(\Gamma(T^*\mathcal{M}_{\textnormal{Sp}}(2m,\alpha,L)))$ making $\tilde{h}$ a $\mathbb{C}^*$-equivariant map.  
	\end{proof}
	The Proposition \ref{prop5} allow us to recover the base $W$ of the Hitchin fibration, and gives us the natural $\mathbb{C}^*$- action on $W$. Also the subspaces $W_{\geq 2k} = \bigoplus_{i=k}^{m}W_{2i}$ (where $W_{2i} = H^0(K(D)^{2i})$) are uniquely determined for each $k=1,\dots, m$ (these are the subspaces where the rate of decay is at least $\lambda^{2k}$, for $\lambda \to 0$). In particular, we can recover uniquely the subspace $W_{2m} = H^0(K(D)^{2m}) \subset W$.
	\begin{prop}\label{prop6}
		The intersection $\mathcal{C} := W_{2m} \cap \mathcal{D} \subset W_{2m}$ has $n+1$ irreducible components
		\[
		\mathcal{C} = \mathcal{C}_X \cup \bigcup_{x \in D} \mathcal{C}_x.
		\]
		Moreover $\mathbb{P}(\mathcal{C}_X ) \subset \mathbb{P}(W_{2m})$ is the dual variety of $X \subset \mathbb{P}(W_{2m}^*)$ and for each $x \in D$, $\mathbb{P}(\mathcal{C}_x) \subset \mathbb{P}(W_{2m})$ is the dual variety of $x \xhookrightarrow{} X \subset \mathbb{P}(W_{2m}^*)$ for the embedding given by the linear series $|K^{2m}D^{2m-1}|$. 
	\end{prop}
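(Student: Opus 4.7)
The plan is to analyze the intersection $\mathcal{C} = W_{2m} \cap \mathcal{D}$ by describing which spectral curves can occur when $s = (0,\ldots,0,s_{2m}) \in W_{2m}$, and then identifying each resulting irreducible piece with a classical dual variety. For such $s$ the spectral curve has equation $t^{2m} + s_{2m} = 0$, so the partial derivative with respect to $t$ is $2m t^{2m-1}$ and forces every singular point of $X_s$ to lie on $t = 0$. In particular, a symmetric pair of nodes as in the $\mathcal{D}_2$ description would need $t \neq 0$, so $W_{2m} \cap \mathcal{D}_2 = \emptyset$ and only $\overline{\mathcal{D}_1}$ and the $\mathcal{D}_x$ can contribute to $\mathcal{C}$.

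Next I would work out the local conditions. Around $y \in X \setminus D$ with local coordinate $z$, writing $s_{2m} = f(z)(dz)^{2m}$, singularity of $X_s$ at $(y,0)$ is equivalent to $f(y) = f'(y) = 0$, i.e.\ $s_{2m}$ vanishes to order $\geq 2$ at $y$. Around a parabolic point $x \in D$ the same computation, carried out in the trivialization $(dz/z)^{2m}$ of $K(D)^{2m}$, shows that every section of $K^{2m}D^{2m-1}$ automatically vanishes once at $x$ when regarded as a section of $K(D)^{2m}$, and the extra vanishing needed for singularity at $(x,0)$ amounts to $s_{2m} \in H^0(K^{2m}D^{2m-1}(-x))$. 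This already produces the decomposition: $\mathcal{C}_x := H^0(K^{2m}D^{2m-1}(-x))$ is a linear hyperplane in $W_{2m}$, while $\mathcal{C}_X$ is the image in $W_{2m}$ of the incidence variety $\{(y,s) \in X \times W_{2m} : \mathrm{ord}_y(s) \geq 2\}$, which is irreducible as it is a vector bundle over $X$.

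With these descriptions in hand, the dual variety interpretation is essentially tautological. A point $[s] \in \mathbb{P}(W_{2m})$ corresponds to a hyperplane in $\mathbb{P}(W_{2m}^*)$ passing through the image of $y$ (resp.\ tangent to $X$ at the image of $y$) precisely when $s$ vanishes (resp.\ vanishes to order $\geq 2$) at $y$. Therefore $\mathbb{P}(\mathcal{C}_x)$ is the dual hyperplane of the image of $x$, and $\mathbb{P}(\mathcal{C}_X)$ is by definition the dual variety of $X \subset \mathbb{P}(W_{2m}^*)$ under the embedding defined by $|K^{2m}D^{2m-1}|$.

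To conclude that these are genuinely the distinct irreducible components, a Riemann--Roch count shows that both $\mathcal{C}_X$ and each $\mathcal{C}_x$ are hypersurfaces in $W_{2m}$; the $\mathcal{C}_x$ are linear and pairwise distinct, being the kernels of evaluation at distinct points of $D$; and $\mathcal{C}_X$ cannot be contained in any $\mathcal{C}_x$ by the reflexivity theorem, since otherwise the dual variety of the curve $X$ would be contained in a hyperplane, forcing $X$ itself to be a point. The step I expect to be most delicate is the bookkeeping at parabolic points, where the twist by $\mathcal{O}(D^{2m-1})$ rather than $\mathcal{O}(D^{2m})$ produces the automatic vanishing and must be tracked carefully to yield the linear hyperplanes $\mathcal{C}_x$ (rather than codimension-two subspaces) and to separate them cleanly from the non-linear piece $\mathcal{C}_X$.
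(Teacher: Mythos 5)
Your proposal is correct and follows essentially the same route as the paper: reduce to the condition that $t^{2m}+s_{2m}=0$ is singular only along $t=0$, translate this into $s_{2m}$ having a double zero (with the twist by $D^{2m-1}$ accounting for the forced vanishing at parabolic points), obtain the decomposition into $\bigcup_{y\in X}H^0(K^{2m}D^{2m-1}(-2y))$ and the hyperplanes $H^0(K^{2m}D^{2m-1}(-x))$, and read off the dual-variety descriptions from the very ample linear system $|K^{2m}D^{2m-1}|$. Your closing check that the $n+1$ pieces are genuinely distinct irreducible components is a small addition the paper leaves implicit, but it does not change the argument.
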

	\begin{proof}
		Let $s = s_{2m} \in H^0(K^{2m}D^{2m-1}) \subset H^0(K(D)^{2m})$. Then the spectral curve $X_s$ is given by the equation $t^{2m} + s_{2m}(y)=0$. Therefore, $X_s$ is singular at the points $(x,0)$, where $x$ is zero of order at least two of $s_{2m}$. Therefore $s \in \mathcal{C}$ if and only if $s_{2m} \in H^0(K(D)^{2m}(-2x))$, by considering $s_{2m}$ as a section of $H^0(K(D)^{2m})$. Since $s_{2m} \in H^0(K^{2m}D^{2m-1})$ the following are the possible cases:\\
		$(a)$ \hspace{0.0001cm}$s_{2m} \in H^0(K^{2m}D^{2m-1}(-2x))$ for some $x \notin D$\\
		$(b)$ \hspace{0.0001cm}$s_{2m} \in H^0(K^{2m}D^{2m-1}(-x))$ for some $x \in D$.\\
		So, we can write
		\begin{align*}
		\mathcal{C} &= \bigcup_{x\in X \setminus D} H^0(K^{2m}D^{2m-1}(-2x)) \cup \bigcup_{x \in D} H^0(K^{2m}D^{2m-1}(-x)) \\
		&= \bigcup_{x\in X} H^0(K^{2m}D^{2m-1}(-2x)) \cup \bigcup_{x \in D} H^0(K^{2m}D^{2m-1}(-x)).
		\end{align*}
		Denote 
		\begin{align*}
		\mathcal{C}_X &= \bigcup_{x\in X} H^0(K^{2m}D^{2m-1}(-2x))\\
		\mathcal{C}_x &= H^0(K^{2m}D^{2m-1}(-x)) \hspace{0.3cm} x \in D
		\end{align*}
		By Lemma \ref{lemma4}, we know that $\mathcal{C}_X$ and, $\mathcal{C}_x$ are irreducible for all $x \in D$ and both have codimension $1$ in $W_{2m}$. Therefore, the first statement follows.\\
		The linear system $|K^{2m}D^{2m-1}|$ is very ample and induces an embedding $X \subset \mathbb{P}(W_{2m}^*)$. The set of hyperplanes in $\mathbb{P}(W_{2m}^*)$ which are tangent to $X$ at $x$ is precisely $\mathbb{P}(H^0(K^{2m}D^{2m-1}(-2x)))$. Therefore, $\mathbb{P}(\mathcal{C}_X ) \subset \mathbb{P}(W_{2m})$ is the dual variety of $X$. Furthermore, for every $x \in D$, the set of hyperplanes in $\mathbb{P}(W_{2m}^*)$  passing through $x$ is $\mathbb{P}(H^0(K^{2m}D^{2m-1}(-x)))$. Therefore, $\mathbb{P}(\mathcal{C}_x) \subset \mathbb{P}(W_{2m})$ is the dual variety of $x \xhookrightarrow{} X \subset \mathbb{P}(W_{2m}^*)$. 
	\end{proof}
	Note that $\mathbb{P}(\mathcal{C}_x) \not \cong \mathbb{P}(\mathcal{C}_X)$, as $\mathbb{P}(\mathcal{C}_x)$ is the dual variety of a point and $\mathbb{P}(\mathcal{C}_X)$ is the dual variety of a compact Riemann surface. Also $\mathcal{C}_x \subset W_{2m}$ is an hyperplane for all $x \in D$. So $\mathcal{C}_X \subset \mathcal{C}$ is the only irreducible component which is not an hyperplane in $W_{2m}$.
	\begin{theorem}\label{thm1}
		Let $(X,D)$ and $(X',D')$ be two compact Riemann surfaces of genus $g \geq 4$ with set of marked points $D \subset X$ and $D' \subset X'$. Let $\mathcal{M}_{\textnormal{Sp}}(2m,\alpha,L)$ and $\mathcal{M}'_{\textnormal{Sp}}(2m,\alpha,L)$ be the moduli spaces of stable symplecic parabolic bundles over $X$ and $X'$ respectively. If $\mathcal{M}_{\textnormal{Sp}}(2m,\alpha,L)$ is isomorphic to $\mathcal{M}'_{\textnormal{Sp}}(2m,\alpha,L)$, then $(X,D)$ is isomorphic to $(X',D')$, i.e. there exist an isomorphism $X \cong X'$ sending $D$ to $D'$. 
	\end{theorem}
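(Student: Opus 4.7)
The plan is to reconstruct the pointed curve $(X,D)$ canonically from intrinsic invariants of $\mathcal{M}_{\textnormal{Sp}}(2m,\alpha,L)$, assembling the propositions built up so far. First I would observe that an isomorphism $\mathcal{M}_{\textnormal{Sp}}(2m,\alpha,L)\cong\mathcal{M}'_{\textnormal{Sp}}(2m,\alpha,L)$ lifts canonically to an isomorphism of cotangent bundles $T^*\mathcal{M}_{\textnormal{Sp}}(2m,\alpha,L)\cong T^*\mathcal{M}'_{\textnormal{Sp}}(2m,\alpha,L)$ intertwining the fiberwise dilation $\mathbb{C}^*$-actions. Applying $\mathrm{Spec}$ to global algebraic functions, Proposition~\ref{prop5} then yields a $\mathbb{C}^*$-equivariant isomorphism of Hitchin bases $W\cong W'$ compatible with the Hitchin maps $h_0,\,h_0'$.

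Next, under the $\mathbb{C}^*$-action on $W$ each summand $W_{2i}=H^0(K^{2i}D^{2i-1})$ sits in weight $2i$, so the decomposition $W=\bigoplus_{i=1}^m W_{2i}$ is intrinsic, and in particular the top-weight piece $W_{2m}$ is recovered canonically. By Proposition~\ref{prop4}, the Hitchin discriminant $\mathcal{D}\subset W$ equals the closure of $h_0(\mathcal{C})$ where $\mathcal{C}$ is the union of complete rational curves in $T^*\mathcal{M}_{\textnormal{Sp}}(2m,\alpha,L)$; both $\mathcal{D}$ and $W_{2m}\cap\mathcal{D}$ are therefore preserved by the induced isomorphism.

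By Proposition~\ref{prop6} the intersection $W_{2m}\cap\mathcal{D}$ decomposes as $\mathcal{C}_X\cup\bigcup_{x\in D}\mathcal{C}_x$, where $\mathcal{C}_X$ is the unique component that is not a hyperplane in $W_{2m}$ (being the dual variety of the nondegenerate curve $X\subset\mathbb{P}(W_{2m}^*)$, whereas each $\mathcal{C}_x$ is the hyperplane dual to a single point). This criterion distinguishes $\mathcal{C}_X$ canonically, so the isomorphism matches $\mathbb{P}(\mathcal{C}_X)$ with $\mathbb{P}(\mathcal{C}'_{X'})$ and the family of hyperplanes $\{\mathbb{P}(\mathcal{C}_x)\}_{x\in D}$ with $\{\mathbb{P}(\mathcal{C}'_{x'})\}_{x'\in D'}$. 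Since $|K^{2m}D^{2m-1}|$ is very ample, by the biduality theorem one recovers the embedded curve $X\subset\mathbb{P}(W_{2m}^*)$ as the dual variety of $\mathbb{P}(\mathcal{C}_X)$, and dualizing each hyperplane $\mathbb{P}(\mathcal{C}_x)$ produces a single point of $\mathbb{P}(W_{2m}^*)$ which automatically lies on this embedded $X$ and equals the image of $x$ under the embedding. Carrying out the same reconstruction on the primed side yields the desired isomorphism $(X,D)\cong(X',D')$.

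The main obstacle is not analytic but conceptual bookkeeping: one has to verify that every intrinsic recipe used above — the recovery of $W$ via $\mathrm{Spec}\,\Gamma$, the weight decomposition under the $\mathbb{C}^*$-action, the identification of $\mathcal{D}$ through complete rational curves, and the isolation of the non-hyperplane component $\mathcal{C}_X$ — is genuinely preserved by an arbitrary abstract isomorphism of the moduli spaces. Each of these has been prepared by the preceding propositions (\ref{prop5}, \ref{prop4}, \ref{prop6}), so the proof ultimately amounts to chaining them together and invoking biduality.
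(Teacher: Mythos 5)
Your proposal is correct and follows essentially the same route as the paper: lift the isomorphism to the cotangent bundles, recover $W$ with its $\mathbb{C}^*$-action via Proposition~\ref{prop5}, identify $\mathcal{D}$ through the complete rational curves via Proposition~\ref{prop4}, isolate the unique non-hyperplane component of $W_{2m}\cap\mathcal{D}$ via Proposition~\ref{prop6}, and conclude by biduality. The only nitpick is that a $\mathbb{C}^*$-equivariant isomorphism a priori preserves only the weight filtration $W_{\geq 2k}$ rather than the full grading, but since you use only $W_{2m}=W_{\geq 2m}$ (on which equivariance forces the restriction to be linear, as the paper notes explicitly), nothing is lost.
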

	\begin{proof}
		Suppose $\Psi : \mathcal{M}_{\textnormal{Sp}}(2m,\alpha,L) \longrightarrow \mathcal{M}'_{\textnormal{Sp}}(2m,\alpha,L)$ is an isomorphism. Then there is an induced isomorphism $d(\Psi^{-1}) : T^*\mathcal{M}_{\textnormal{Sp}}(2m,\alpha,L) \longrightarrow T^*\mathcal{M}'_{\textnormal{Sp}}(2m,\alpha,L)$, which is $\mathbb{C}^*$-equivariant for the standard dilation action. By Proposition \ref{prop5}, there exist unique $\mathbb{C}^*$- actions on $W$ and $W'$ induced from the $\mathbb{C}^*$-action by dilations on the fibers. Therefore, we have the following commutative diagram
		\[
		\begin{tikzcd}
		T^*\mathcal{M}_{\textnormal{Sp}}(2m,\alpha,L) \arrow{r}{d(\Psi^{-1})} \arrow[swap]{d}{} & T^*\mathcal{M}'_{\textnormal{Sp}}(2m,\alpha,L) \arrow{d}{} \\%
		W \arrow{r}{f}& W'
		\end{tikzcd}
		\]
		for some $\mathbb{C}^*$-equivariant isomorphism $f : W \longrightarrow W'$. Hence, $f$ sends the subspace $W_{2m} \subset W$ to the subspace $W_{2m}' \subset W'$. The restriction map $f : W_{2m} \longrightarrow W_{2m}'$ is $\mathbb{C}^*$-equivariant and homogeneous of degree $2m$, so it is linear. \\
		Since $d(\Psi^{-1})$ is an isomorphism, it maps the complete rational curves to the complete rational curves. By Proposition \ref{prop4}, the locus of singular spectral curves is preserved by $f$, i.e. $f$ sends $\mathcal{D} \subset W$ to $\mathcal{D}' \subset W'$. So the restriction map sends $\mathcal{C}= \mathcal{D} \cap W_{2m}$ to $\mathcal{C}' = \mathcal{D}' \cap W_{2m}'$. This induces an isomorphism $f^\vee : \mathbb{P}(W_{2m}^*) \longrightarrow \mathbb{P}((W_{2m}^{'})^*)$. Since $\mathcal{C}_X \subset \mathcal{C}$ is canonically identified as the only irreducible component which is not an hyperplane, by Proposition \ref{prop6} $f^\vee$ sends $X$ to $X'$. Moreover, again by Proposition \ref{prop6} the divisor $D \subset X$ is the dual of the rest of the components $\mathbb{P}(\mathcal{C}_x) \subset \mathbb{P}(\mathcal{C})$. Therefore, $f^\vee$ must send $D$ to $D'$. Hence, an isomorphism $f^\vee : (X,D) \longrightarrow (X',D')$ is obtained.
	\end{proof}
	\section{The Nilpotent Cone}
	The fiber $h^{-1}(0)$ is called the nilpotent cone. It is Lagrangian (see \cite{G01}, the proof also works for the parabolic case). A symplectic parabolic Higgs bundle is in the nilpotent cone if and only if the Higgs field is a nilpotent endomorphism. The moduli space of symplectic parabolic bundle is embedded in the nilpotent cone as a component and we will see that it is the unique component of the nilpotent cone that doesn't admit a non-trivial $\mathbb{C}^*$-action. A non-trivial $\mathbb{C}^*$-action gives a non-zero vector field, so it is enough to show that $H^0(\mathcal{M}_{\textnormal{Sp}}(2m,\alpha,L),T_{\mathcal{M}_{\textnormal{Sp}}(2m,\alpha,L)})=0$.
	\begin{prop}\label{prop7}
		$\mathcal{M}_{\textnormal{Sp}}(2m,\alpha,L)$ doesn't admit a non-zero vector field, i.e.\\ $H^0(\mathcal{M}_{\textnormal{Sp}}(2m,\alpha,L),T_{\mathcal{M}_{\textnormal{Sp}}(2m,\alpha,L)})=0$.
	\end{prop}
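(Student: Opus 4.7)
The plan is to show that any global vector field on $\mathcal{M}_{\textnormal{Sp}}(2m,\alpha,L)$, viewed as a function on the cotangent bundle, extends to the full moduli space of Higgs bundles, factors through the Hitchin base, and there is killed by a weight argument with respect to the standard $\mathbb{C}^*$-action.

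First, I would reinterpret a global vector field $v \in H^0(\mathcal{M}_{\textnormal{Sp}}(2m,\alpha,L), T_{\mathcal{M}_{\textnormal{Sp}}(2m,\alpha,L)})$ as a regular function on the cotangent bundle. Pairing tangent and cotangent vectors gives a function $\widehat{v} : T^*\mathcal{M}_{\textnormal{Sp}}(2m,\alpha,L) \to \mathbb{C}$ that is linear on each fiber, hence of weight $1$ with respect to the fiberwise dilation action $(E_*, \Phi) \mapsto (E_*, t\Phi)$. Since this cotangent bundle sits in $\mathcal{N}_{\textnormal{Sp}}(2m,\alpha,L)$ as an open subset whose complement has codimension at least $2$ (this was already used in the proof of Proposition \ref{prop5}, via the Faltings estimate for $g \geq 4$), and $\mathcal{N}_{\textnormal{Sp}}(2m,\alpha,L)$ is smooth, Hartogs' theorem yields a regular extension $\widetilde{v} : \mathcal{N}_{\textnormal{Sp}}(2m,\alpha,L) \to \mathbb{C}$, still of weight $1$ for the dilation action.

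Next, I would invoke the properness of the Hitchin map $h : \mathcal{N}_{\textnormal{Sp}}(2m,\alpha,L) \to W$ together with connectedness of its fibers, exactly as in the proof of Proposition \ref{prop5}: any global function on $\mathcal{N}_{\textnormal{Sp}}(2m,\alpha,L)$ is constant on the fibers of $h$ and hence factors as $\widetilde{v} = F \circ h$ for some regular function $F : W \to \mathbb{C}$. By Proposition \ref{prop5}, the dilation action descends to a unique $\mathbb{C}^*$-action on $W$, and since $h(E_*, t\Phi)$ has components $t^{2i} s_{2i}$ (the coefficients of the characteristic polynomial of $t\Phi$), this descended action scales $W_{2i} = H^0(K^{2i}D^{2i-1})$ with weight $2i$. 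The equivariance of $h$ forces $F$ to be a weight-$1$ polynomial function for this grading.

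The concluding step is now an elementary weight computation. Any monomial in coordinates of $W$ has weight $\sum 2i\cdot a_i$ with $a_i \in \mathbb{Z}_{\geq 0}$, which is even and at least $2$ when non-constant, and $0$ when constant; no such monomial has weight $1$, so $F \equiv 0$. Hence $\widetilde{v} \equiv 0$, and therefore $\widehat{v} \equiv 0$; since $\widehat{v}$ is the function encoding the vector field $v$ on the cotangent fibers, we conclude $v = 0$, proving $H^0(\mathcal{M}_{\textnormal{Sp}}(2m,\alpha,L), T_{\mathcal{M}_{\textnormal{Sp}}(2m,\alpha,L)}) = 0$. The only step requiring care is the Hartogs extension across the Higgs locus $\mathcal{N}_{\textnormal{Sp}}(2m,\alpha,L) \setminus T^*\mathcal{M}_{\textnormal{Sp}}(2m,\alpha,L)$; this is where the hypothesis $g \geq 4$ enters, guaranteeing codimension at least $2$ so that Hartogs applies, and this should be cited from the Faltings-type estimate already used earlier in the paper.
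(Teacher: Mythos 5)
Your proposal is correct and follows essentially the same route as the paper's own proof: interpret the vector field as a fiberwise-linear function on $T^*\mathcal{M}_{\textnormal{Sp}}(2m,\alpha,L)$, extend by Hartogs to $\mathcal{N}_{\textnormal{Sp}}(2m,\alpha,L)$, factor through the Hitchin base via properness and connectedness of the fibers, and kill the resulting function by comparing its weight $1$ against the even weights $2i$ on $W$. Your concluding weight computation is just a slightly more explicit version of the paper's final step.
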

    \begin{proof}
	Let $s \in H^0(\mathcal{M}_{\textnormal{Sp}}(2m,\alpha,L),T_{\mathcal{M}_{\textnormal{Sp}}(2m,\alpha,L)})$. It gives a holomorphic function $\tilde{s}$ on $T^*\mathcal{M}_{\textnormal{Sp}}(2m,\alpha,L)$. By Hartog's theorem $\tilde{s}$ extends to a holomorphic function on $\mathcal{N}_{\textnormal{Sp}}(2m,\alpha,L)$. Recall that the generic fiber of the Hitchin map $h$ is compact and connected. Since $h$ is proper and $W$ is smooth, every fiber is compact and connected. Therefore $\tilde{s}$ is constant on each fiber and hence
	\[
	\tilde{s} = f \circ h
	\]
	for some holomorphic function $f : W \to \mathbb{C}$.\\
	Now consider the standard $\mathbb{C}^*$-action (induced by the map $\Phi \to \lambda\Phi$) on $\mathcal{N}_{\textnormal{Sp}}(2m,\alpha,L)$. Since $s$ is a vector field, we have 
	\[
	\tilde{s}(\lambda \cdot E_*)= \lambda\tilde{s}(E_*) \hspace{0.3cm} \text{for} \hspace{0.1cm} E_* \in \mathcal{N}_{\textnormal{Sp}}(2m,\alpha,L)
	\]
	If $h(E_*) = (s_2, \dots , s_{2m})$, then $h(\lambda \cdot E_*) = (\lambda^2s_2, \dots, \lambda^{2m}s_{2m}) $. Therefore $f$ satisfies
	\begin{equation}\label{eqn5}
	f(\lambda^2s_2, \dots, \lambda^{2m}s_{2m})= \lambda f(s_2,\dots, s_{2m}).
	\end{equation}
	Since there is no nonzero holomorphic function $f : W \to \mathbb{C}$ which satisfies the condition (\ref{eqn5}), we conclude that $\tilde{s} = 0$.
	\end{proof}

We can modify Simpson's Lemma $11.9$ in \cite{S95} to the symplectic parabolic case with nonzero degree, and hence we get the following
\begin{lemma}\label{lemma5}
	Let $(E_*,\Phi)$ be a symplectic parabolic Higgs bundle in the nilpotent cone, with $\Phi \not= 0$. Consider the standard $\mathbb{C}^*$-action sending $(E_*,\Phi,\psi)$ to $(E_*,t\Phi)$. Assume that $(E_*,\Phi)$ is a fixed point, i.e. for every $t$ there is an isomorphism with $(E_*,t\Phi)$. Then there is another Higgs bundle $(E_*',\Phi')$ in the nilpotent cone, not isomorphic to $(E_*,\Phi)$ such that $\lim_{t \to \infty}(E_*',t\Phi')=(E_*,\Phi)$.  
\end{lemma}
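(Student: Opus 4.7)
My plan is to adapt Simpson's Lemma $11.9$ in \cite{S95} to the symplectic parabolic setting, via a Bia\l ynicki--Birula analysis at the fixed point $(E_*,\Phi)$. Since $(E_*,\Phi)$ is a $\mathbb{C}^*$-fixed point, standard structure theory gives a weight decomposition
\[
E_* \;=\; \bigoplus_{k=0}^{N} E_*^k, \qquad \Phi(E_*^k) \subset E_*^{k+1} \otimes K(D),
\]
with the $\mathbb{C}^*$-action induced by the gauge automorphism $g_t = \sum_k t^k\,\mathrm{Id}_{E_*^k}$. Because $\psi$ is preserved up to a character, this grading is compatible with the symplectic form: $\psi$ pairs $E^k$ with $E^{N-k}$, and $\Phi$ is $\psi$-antisymmetric in the graded sense. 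The assumption $\Phi \neq 0$ forces $N \geq 1$.

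I would next identify $T_{(E_*,\Phi)}\mathcal{N}_{\mathrm{Sp}}(2m,\alpha,L)$ with the first hypercohomology $\mathbb{H}^1(\mathcal{C}^\bullet)$ of the deformation complex
\[
\mathcal{C}^\bullet:\ \mathcal{F}_\psi \xrightarrow{\,[\Phi,\,\cdot\,]\,} \mathcal{F}_\psi\otimes K(D),
\]
where $\mathcal{F}_\psi\subset \operatorname{End}(E_*)$ is the sheaf of $\psi$-antisymmetric (strongly) parabolic endomorphisms. The grading on $E_*$ induces a grading $\mathcal{C}^\bullet = \bigoplus_w \mathcal{C}^\bullet_w$ by $\mathbb{C}^*$-weight, and the derived action on $\mathbb{H}^1$ respects this splitting (with the usual $+1$ shift on the $\mathcal{C}^1$-term to account for the weight of $\Phi$). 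The Bia\l ynicki--Birula upward cell at $(E_*,\Phi)$, i.e.\ the locus of points flowing to $(E_*,\Phi)$ as $t\to\infty$, has tangent space equal to the strictly negative-weight part $\bigoplus_{w<0}\mathbb{H}^1(\mathcal{C}^\bullet_w)$.

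The heart of the argument will be to exhibit a nonzero negative-weight class. I would do this by weight-graded Serre duality: $\mathbb{H}^1(\mathcal{C}^\bullet_w)$ is dual to $\mathbb{H}^1$ of the Serre-dual complex in the opposite weight. The class of $\Phi$ itself represents a nonzero element of weight $+1$ (the tangent direction along the $\mathbb{C}^*$-orbit, nontrivial precisely because $\Phi\neq 0$), so duality produces a nonzero class in some weight $w<0$. Integrating this first-order direction using the smoothness of $\mathcal{N}_{\mathrm{Sp}}(2m,\alpha,L)$ yields a germ of honest deformations; since the Hitchin map is $\mathbb{C}^*$-equivariant with strictly positive weights on $W$, any negative-weight deformation automatically remains in the nilpotent cone. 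A generic nearby point $(E_*',\Phi')$ in this germ will not be isomorphic to $(E_*,\Phi)$ and, by construction of the upward Bia\l ynicki--Birula cell, satisfies $\lim_{t\to\infty}(E_*', t\Phi') = (E_*,\Phi)$.

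The principal difficulty I expect is the nonvanishing step --- verifying the weight-graded Serre duality in the symplectic parabolic setting and ruling out the possibility that the dual class lives in a weight for which $\mathbb{H}^1$ of the complex is trivial. Both issues are handled in \cite{S95} in the classical Higgs case, so the task here is essentially bookkeeping: checking that the $\psi$-antisymmetry constraint and strong parabolicity at the marked points are compatible with each step, which is delicate since the $\psi$-antisymmetric subspace of $\operatorname{End}(E_*)$ interacts nontrivially with both the grading and the parabolic filtrations.
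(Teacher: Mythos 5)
The paper itself offers no proof of this lemma; it simply asserts that Simpson's Lemma 11.9 of \cite{S95} adapts to the symplectic parabolic case. Your plan follows the same framework that such an adaptation would use (weight decomposition at the fixed point, graded deformation complex, Bia\l ynicki--Birula cell for $t\to\infty$), and two of your observations are sound: the reduction of the lemma to the nonvanishing of the negative-weight part of $\mathbb{H}^1$, and the remark that negative-weight deformations automatically stay in the nilpotent cone because the Hitchin map has strictly positive weights.

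However, the step you yourself flag as the heart of the argument fails as stated. At a fixed point the tangent vector along the $\mathbb{C}^*$-orbit is \emph{zero}: the cocycle $(0,\Phi)$ representing the deformation $(E_*,\Phi+\varepsilon\Phi)$ is the coboundary of the grading endomorphism $g=\sum_k k\,\mathrm{Id}_{E_*^k}$, since $[\Phi,g]=-\Phi$ when $\Phi$ shifts the grading by one (and $g$ lies in the $\psi$-antisymmetric complex once the grading is normalized so that $\psi$ pairs $E^k$ with $E^{-k}$; any other normalization changes $g$ by a multiple of the identity, which is killed by $\mathrm{ad}\,\Phi$). So there is no nonzero class of weight $+1$ to dualize --- indeed the vanishing of this class is exactly what it means for the point to be fixed. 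Moreover, even granting such a class, the relevant duality on $\mathbb{H}^1$ is the holomorphic symplectic form of the moduli space, which has weight $1$ under the action and therefore pairs the weight-$w$ subspace with the weight-$(1-w)$ subspace; a weight-$1$ class would pair with weight $0$, not with a negative weight. The nonvanishing in Simpson's Lemma 11.9 comes from a different mechanism: for $w<0$, stability forces $\mathbb{H}^0(\mathcal{C}^\bullet_w)=0$ and, by duality, $\mathbb{H}^2(\mathcal{C}^\bullet_w)=0$, so $\dim\mathbb{H}^1(\mathcal{C}^\bullet_w)=-\chi(\mathcal{C}^\bullet_w)$, and a Riemann--Roch computation shows that the sum of these dimensions over $w<0$ is strictly positive precisely when $\Phi\neq 0$. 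Carrying out that Euler characteristic count for the $\psi$-antisymmetric, strongly parabolic deformation complex is the actual content you would need to supply; your proposal currently replaces it with an argument that cannot work.
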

Therefore we obtain the following
\begin{prop}\label{prop8}
	There is only one component inside the nilpotent cone that does not admit a nontrivial $\mathbb{C}^*$-action, and it is the moduli space of symplectic parabolic bundles $\mathcal{M}_{\textnormal{Sp}}(2m,\alpha,L)$ on $X$.
\end{prop}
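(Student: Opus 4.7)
The statement has two halves, and I would attack them separately. First, that $\mathcal{M}_{\textnormal{Sp}}(2m,\alpha,L)$ admits no nontrivial $\mathbb{C}^{*}$-action. This is essentially immediate from Proposition~\ref{prop7}: a nontrivial algebraic $\mathbb{C}^{*}$-action would produce, by differentiating at the identity, a nonzero global holomorphic vector field on $\mathcal{M}_{\textnormal{Sp}}(2m,\alpha,L)$, contradicting the vanishing of $H^{0}(\mathcal{M}_{\textnormal{Sp}}(2m,\alpha,L), T_{\mathcal{M}_{\textnormal{Sp}}(2m,\alpha,L)})$. (Note that on $\mathcal{M}_{\textnormal{Sp}}(2m,\alpha,L) \subset h^{-1}(0)$ the Higgs field is identically zero, so the standard dilation action restricts trivially there, as it should.)

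Second, I would show that every other irreducible component of the nilpotent cone does carry a nontrivial $\mathbb{C}^{*}$-action, using the standard dilation $t\cdot(E_{*},\Phi,\psi) = (E_{*},t\Phi,\psi)$ on $\mathcal{N}_{\textnormal{Sp}}(2m,\alpha,L)$. Since $t\Phi$ is nilpotent whenever $\Phi$ is, this action preserves $h^{-1}(0)$; and because $\mathbb{C}^{*}$ is connected while the set of irreducible components is finite and discrete, the induced permutation action is constant, so each component is preserved setwise.

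Let $C \subset h^{-1}(0)$ be an irreducible component with $C \neq \mathcal{M}_{\textnormal{Sp}}(2m,\alpha,L)$. I claim the restricted action is nontrivial. Suppose, for contradiction, that every point of $C$ is a fixed point. Since $C$ is not the zero-Higgs locus, its generic point has $\Phi \neq 0$; pick such a generic $(E_{*},\Phi) \in C$, which by genericity lies in no other component of $h^{-1}(0)$. Apply Lemma~\ref{lemma5} to the fixed point $(E_{*},\Phi)$: it produces a nilpotent $(E_{*}',\Phi')$, not isomorphic to $(E_{*},\Phi)$, with $\lim_{t\to\infty}(E_{*}',t\Phi') = (E_{*},\Phi)$. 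The closure of the orbit $\{(E_{*}',t\Phi') : t \in \mathbb{C}^{*}\}$ is an irreducible subset of $h^{-1}(0)$ containing the generic point $(E_{*},\Phi)$, so it must lie entirely inside $C$. But if the action on $C$ is trivial, then $(E_{*}',t\Phi') \cong (E_{*}',\Phi')$ for all $t$, and passing to the limit gives $(E_{*},\Phi) \cong (E_{*}',\Phi')$, contradicting Lemma~\ref{lemma5}.

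The main obstacle is packaged into Lemma~\ref{lemma5}, whose proof adapts Simpson's deformation argument (Lemma~11.9 in \cite{S95}) to the symplectic parabolic setting with nonzero degree; once that input is available, the rest is routine bookkeeping about irreducibility of orbit closures and genericity of the chosen point in $C$.
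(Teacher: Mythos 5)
Your proof is correct and follows essentially the same route as the paper: the first half is exactly the paper's reduction to Proposition~\ref{prop7} via the vector field generated by the action, and the second half is a (more carefully spelled out) version of the paper's appeal to Lemma~\ref{lemma5}, with the orbit-closure and genericity bookkeeping made explicit. The only point you gloss over, which the paper includes, is why $\mathcal{M}_{\textnormal{Sp}}(2m,\alpha,L)$ is itself an irreducible component of the nilpotent cone; this follows because the zero-Higgs-field embedding is closed and the nilpotent cone is Lagrangian, hence of the same dimension.
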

\begin{proof}
	The map that sends $E_*$ to $(E_*,0)$ defines an embedding of $\mathcal{M}_{\textnormal{Sp}}(2m,\alpha,L)$ in the nilpotent cone. Since both have same dimensions, it is one component of the nilpotent cone. By Proposition \ref{prop7}, we know that $\mathcal{M}_{\textnormal{Sp}}(2m,\alpha,L)$ doesn't admit a non-trivial $\mathbb{C}^*$-action.\\
	The $\mathbb{C}^*$-action in the rest of the components is given by $(E_*,\Phi) \mapsto (E_*,t\Phi)$. This action is nontrivial because of lemma \ref{lemma5}.
	\end{proof}	
\section{Kodaira-Spencer map}
Our next goal is to identify the nilpotent cone among the fibers of the Hitchin map.\\
Let $\mathcal{N}(2m,\alpha,\xi)$ denote the moduli space of parabolic Higgs bundles of rank $2m$ on $X$ with fixed determinant $\xi$. Then there is a Hitchin map 
\[
h' : \mathcal{N}(2m,\alpha,\xi) \to W'
\]
where $W' = H^0(K^2D) \oplus H^0(K^3D^2) \oplus \cdots \oplus H^0(K^{2m}D^{2m-1})$. Let $s \in W'$ be a point such that the spectral curve $X_s$ is smooth. The Kodaira-Spencer map
\[
u' : T_sW' \cong W' \to H^1(X_s,T_{X_s})
\]
sends a vector in the tangent space $T_sW'$ to the corresponding infinitesimal deformation of $X_s$, which is an element of $H^1(X_s,T_{X_s})$. We can also consider the restriction of $u'$ to $W$ obtaining another Kodaira-Spencer map
\[
u : T_sW \cong W \to H^1(X_s,T_{X_s}).
\]
By \cite[Proposition 5.2]{GL11}, we have an exact sequence
\[
0 \to H^0(X,\mathcal{O}_X) \xrightarrow{} T_sW' \xrightarrow{u'} H^1(X_s,T_{X_s}), 
\]
and hence $\dim\text{Ker}(u')=1$. There are some elements in $H^0(X,\mathcal{O}_X)$ which are in the kernel of the restricted Kodaira-Spencer map $u$. Let $\lambda \in \mathbb{C} \cong H^0(X, \mathcal{O}_X)$, and $(x,v) \in X_s$. Then the deformation sending $(x,v)$ to $(x,e^\lambda v)$ doesn't change the isomorphism class of $X_s$. This deformation is produced by the standard $\mathbb{C}^*$-action and it is in the kernel of the restricted Kodaira-Spencer map $u$. Therefore we obtain the following
\begin{prop}\label{prop9}
	The kernel of the Kodaira-Spencer map $u$ has dimension $1$.
\end{prop}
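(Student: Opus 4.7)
The plan is to exploit the inclusion $W \subset W'$ and the fact that the restricted Kodaira--Spencer map $u$ is literally the restriction of $u'$ to the symplectic subspace. Under this identification one has $\ker(u) = \ker(u') \cap W$, so since $\dim \ker(u') = 1$ by the Gómez--Logares exact sequence already cited, it suffices to exhibit a nonzero element of $\ker(u')$ that lies in $W$; equivalently, to show that the 1-dimensional subspace $\ker(u') \subset T_s W'$ is automatically contained in $T_s W \subset T_s W'$ for $s \in W$.

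The key observation is that the kernel of $u'$ is spanned by the infinitesimal generator of the fiberwise scaling $\mathbb{C}^*$-action on the total space $\mathcal{S}$ of $K(D)$ given by $(x,v) \mapsto (tx, tv)$; indeed, this action changes the embedding of $X_s$ inside $\mathcal{S}$ but not its isomorphism class, which is precisely why it lies in $\ker(u')$ (this is the content of the identification $\ker(u') \cong H^0(X,\mathcal{O}_X)$ in the excerpt). On the Hitchin base $W'$ this action takes the explicit form $t \cdot (s_2, s_3, s_4, \ldots, s_{2m}) = (t^2 s_2, t^3 s_3, t^4 s_4, \ldots, t^{2m} s_{2m})$, so its infinitesimal generator at the point $s = (s_2, s_3, \ldots, s_{2m})$ is the vector
\[
v_s \;=\; (2 s_2,\, 3 s_3,\, 4 s_4,\, \ldots,\, 2m\, s_{2m}) \;\in\; T_s W'.
\]

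Now I specialize to $s \in W$: by definition of $W$ the odd-indexed components $s_3, s_5, \ldots, s_{2m-1}$ vanish, and therefore
\[
v_s \;=\; (2 s_2,\, 0,\, 4 s_4,\, 0,\, \ldots,\, 2m\, s_{2m}),
\]
which manifestly sits in the symplectic subspace $W \subset W'$. Hence $v_s \in T_s W \cap \ker(u') = \ker(u)$, and this nonzero vector (nonzero because at a generic $s$, giving a smooth spectral curve, not all even $s_{2i}$ vanish) spans a 1-dimensional subspace of $\ker(u)$. Combined with $\ker(u) \subset \ker(u')$, which has dimension $1$, this gives $\dim \ker(u) = 1$. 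The main thing to verify carefully is the explicit identification of the generator of $\ker(u')$ with the infinitesimal $\mathbb{C}^*$-scaling on the Hitchin base, but this is exactly the computation indicated in the paragraph preceding the proposition, so no additional obstacle is expected.
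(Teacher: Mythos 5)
Your argument is correct and is essentially the paper's own: the paper likewise deduces $\dim\ker(u')=1$ from the Gómez--Logares exact sequence and observes that the fiberwise scaling deformation $(x,v)\mapsto(x,e^{\lambda}v)$, i.e.\ the standard $\mathbb{C}^*$-action, gives a nonzero element of $\ker(u)$. The only slip is writing the action as $(x,v)\mapsto(tx,tv)$; it should scale only the fibre coordinate, $(x,v)\mapsto(x,tv)$, which is what your induced action $s_i\mapsto t^is_i$ on the Hitchin base actually corresponds to.
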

The standard $\mathbb{C}^*$-action on $\mathcal{N}_{\textnormal{Sp}}(2m,\alpha,L)$, sending $(E_*,\Phi)$ to $(E_*,t\Phi)$ induces an action on $W$, whose only fixed point is the origin.
\begin{prop}\label{prop10}
	Let $g : \mathbb{C}^* \times W \to W$ be an action, having exactly one fixed point, and admitting a lift to $\mathcal{N}_{\textnormal{Sp}}(2m,\alpha,L)$. Then this fixed point is the origin of $W$.
\end{prop}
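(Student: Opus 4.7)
The plan is to argue by contradiction: assume that the unique fixed point $s_0$ of $g$ is nonzero, and show via the Kodaira-Spencer computation (Proposition \ref{prop9}) that $0$ would then also have to be a fixed point. Let $V$ denote the holomorphic vector field on $W$ generating $g$, so $V$ vanishes exactly at $s_0$, and let $V_{\mathrm{std}}$ denote the vector field generated by the standard weighted scaling $(s_2,\dots,s_{2m}) \mapsto (\lambda^2 s_2,\dots,\lambda^{2m} s_{2m})$, which vanishes exactly at $0$. The strategy is to show that $V(s)$ is everywhere proportional to $V_{\mathrm{std}}(s)$: this forces $V(0) = 0$, contradicting the uniqueness of $s_0$.

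The heart of the argument is to prove that, for a generic $s$ at which the spectral curve $X_s$ is smooth, $V(s)$ lies in the kernel of the Kodaira-Spencer map $u_s : T_s W \to H^1(X_s, T_{X_s})$. First I would observe that the lift $\tilde g(t)$ yields an algebraic isomorphism between $h^{-1}(s)$ and $h^{-1}(g(t) s)$, which are open subsets of $\mathrm{Prym}(X_s,X)$ and $\mathrm{Prym}(X_{g(t)s},X)$ respectively by \cite[Theorem 4.1]{R20}. Then I would upgrade this to an isomorphism of polarized Pryms (with the polarization inherited from the Hitchin symplectic form on $\mathcal{N}_{\mathrm{Sp}}(2m,\alpha,L)$) and invoke a Torelli theorem for Prym varieties, in the spirit of \cite[Proposition 5.3]{GL11}, to conclude that $X_s \cong X_{g(t) s}$ as double covers of $X$. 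Hence the classifying morphism $s \mapsto [X_s]$ into $\mathcal{M}_g$ is constant along $g$-orbits, so its differential, which agrees with $u_s$ under the standard identification, annihilates $V(s)$.

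With this in hand, Proposition \ref{prop9}, together with the observation that $V_{\mathrm{std}}(s) \in \ker u_s$ (the standard scaling merely rescales the tautological section on $\mathcal{S}$ and therefore induces trivial deformations of $X_s$), shows that $V(s) = c(s)\, V_{\mathrm{std}}(s)$ on the dense open set of $s$ at which $X_s$ is smooth and $V_{\mathrm{std}}(s) \neq 0$. The proportionality factor $c$ is locally a ratio $V_i / (2 i\, s_i)$ of holomorphic functions in the weight decomposition of $W$; these local expressions agree on overlaps, so $c$ is holomorphic on $W \setminus \{0\}$. Since $\{0\}$ has codimension $\dim W \geq 2$, Hartogs' theorem extends $c$ holomorphically across the origin. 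The identity $V = c \, V_{\mathrm{std}}$ then propagates to all of $W$ by continuity, and evaluating at $0$ gives $V(0) = c(0) \cdot V_{\mathrm{std}}(0) = 0$, so $0$ is a fixed point of $g$ and therefore $s_0 = 0$.

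The main obstacle is the Torelli step: promoting the bare algebraic isomorphism $h^{-1}(s) \cong h^{-1}(g(t)s)$ into an isomorphism of polarized Prym varieties. One has to identify the natural polarization on the Hitchin fibers with the one descending from the symplectic form on $\mathcal{N}_{\mathrm{Sp}}(2m,\alpha,L)$, and to verify that $\tilde g$ preserves this polarization up to a scalar, so that the Torelli theorem for Pryms is applicable. Once this is in place, the remaining ingredients are a direct adaptation of the analogous constructions in \cite{GL11} and \cite{BG03}.
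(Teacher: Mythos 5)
Your overall strategy --- show that the vector field $V$ generating $g$ lies in the kernel of the Kodaira--Spencer map at points with smooth spectral curve, then use Proposition \ref{prop9} to force $V$ to be proportional to the standard vector field and conclude that the origin is a fixed point --- is the same as the paper's, and your endgame (writing $V = c\,V_{\mathrm{std}}$, extending $c$ across the origin by Hartogs, and evaluating at $0$) is a legitimate repackaging of the paper's conclusion that the $g$-orbit through $s$ is contained in the standard orbit, whose closure contains the origin. The genuine gap is in the middle step, which you yourself flag as the main obstacle: you propose to deduce $X_s \cong X_{g(t)s}$ from an isomorphism of polarized Prym varieties via ``a Torelli theorem for Prym varieties.'' No such global theorem is available here: the Prym map is not injective in general (Donagi's tetragonal construction gives counterexamples), at best one has generic-injectivity statements that would need substantial extra work to apply to spectral curves, and the polarization comparison you defer is itself nontrivial. (Also, $X_s \to X$ is a degree-$2m$ cover, not a double cover; the involution relevant to the Prym is $\sigma(\eta)=-\eta$ on $X_s$.) As written, this step does not go through.

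The point you are missing is that the global statement is not needed; an infinitesimal one suffices, and this is exactly what the paper (following \cite{BG03} and \cite{GL11}) uses. Let $\eta = V(s)$ and let $\eta_1 = u(\eta) \in H^1(X_s,T_{X_s})$ be the induced first-order deformation of the spectral curve; it induces deformations $\eta_2$ of $J(X_s)$ and $\eta_3$ of $P_s = \mathrm{Prym}(X_s,X)$. Because the lift of $g$ identifies the Hitchin fibers, $P_s \cong P_{g(t,s)}$ for all $t$, so $\eta_3 = 0$. Using the unramified covering $q : J(X)\times P_s \to J(X_s)$ one has
\[
H^1(X_s,T_{X_s}) \xhookrightarrow{i} H^1(J(X_s),T_{J(X_s)}) \xhookrightarrow{\epsilon} H^1(J(X)\times P_s, T_{J(X)\times P_s}),
\]
where $i$ is injective by the classical (infinitesimal) Torelli theorem for curves and the image of $\epsilon\circ i$ lands in the summand $H^1(P_s,T_{P_s})$; hence $\eta_3 = 0$ forces $\eta_1 = 0$, i.e.\ $V(s)\in\ker u$. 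This replaces your global Prym--Torelli step, requires no discussion of polarizations, and with this substitution your argument closes.
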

\begin{proof}
	The proof is completely analogous to the proof in \cite[Proposition 5.1]{BG03}. For convenience of the reader, we only give a sketch of the proof.\\
	Let $s \in W$ be a point such that the corresponding spectral curve $X_s$ is smooth. The tangent vector defined at $s$ by the standard $\mathbb{C}^*$-action is contained in the kernel of the Kodaira-Spencer map $u$, because the standard action doesn't change the spectral curve (upto isomorphism). We are going to show that the tangent vector defined by any action which admits a lift to $\mathcal{N}_{\textnormal{Sp}}(2m,\alpha,L)$ is also in the kernel of the Kodaira-Spencer map $u$.\\
	Denote $J=J(X)$, $J_s = J(X_s)$ and $P_s = \text{Prym}(X_s/X)$. There is an unramified covering map 
	\[
	q : J \times P_s \to J_s
	\] 
	sending $(L_1,L_2)$ to $L_1 \otimes \pi^*L_2$, where $\pi : X_s \to X$ is the projection map. Therefore, a deformation of $J_s$ gives a deformation of $J \times P_s$.\\
	Let $\eta$ be the tangent vector defined by the given action $g : \mathbb{C}^* \times W \to W$. The image $u(\eta)$ under the Kodaira-Spencer map produces a deformation of the spectral curve $u(\eta)= \eta_1 \in H^1(X_s,T_{X_s})$, its Jacobian $\eta_2 \in H^1(J_s, T_{J_s})$, and the Prym variety $\eta_3 \in H^1(P_s,T_{P_s})$. A deformation of $X_s$ produces a deformation of $J_s$. We have the following homomorphisms
	\[
	H^1(X_s,T_{X_s}) \xhookrightarrow{i} H^1(J_s,T_{J_s}) \xhookrightarrow{\epsilon} H^1(J\times P_s,T_{J\times P_s}) \xhookleftarrow{} H^1(P_s,T_{P_s})
	\]
	The map $i$ is injective because of the classical Torelli theorem (a nonzero deformation of a curve produces a nonzero deformation of its Jacobian). The image $\epsilon \circ i$ actually lies in $H^1(P_s,T_{P_s})$, since a deformation of $J\times P_s$ induced by a deformation of $X_s$ is induced by a deformation of $P_s$.\\
	The fiber of the Hitchin map $h$ over $s$ is isomorphic to $P_s$. Since the action $g$ lifts to $\mathcal{N}_{\textnormal{Sp}}(2m,\alpha,L)$, the fiber $P_s$ is isomorphic to $P_{g(t,s)}$ at $g(t,s)$ for all $t \in \mathbb{C}^*$. Therefore we have $\eta_3=0$, and hence $\eta_1=0$. So the tangent vector $\eta$ defined by the action $g$ lies in the kernel of the Kodaira-Spencer map $u$. By Proposition \ref{prop9}, we can say that the orbit of $g$ through $s$ is included in the orbit of the standard action through $s$. In particular, since the origin is the fixed point of the standard action, it is a limiting point of the orbit of $g$ through $s$. But the origin is not in the orbit as the fiber over the origin is not isomorphic to the fiber over $s$. Since the limiting points of an orbit are the fixed points, the origin is a fixed point of the action. Since $g$ has exactly one fixed point, the origin is the only fixed point.
	\end{proof}	
\section{Proof of main theorem}

\begin{proof}[Proof of Theorem \ref{thm2}]
	Consider $Y$ an algebraic variety isomorphic to the moduli space $\mathcal{N}_{\textnormal{Sp}}(2m,\alpha,L)$. Since the fibers of the Hitchin map $h$ are compact, the ring of global functions of $\mathcal{N}_{\textnormal{Sp}}(2m,\alpha,L)$ factorizes through the $h$ and therefore
\[
\text{Spec}(\Gamma(Y)) \cong \text{Spec}(\Gamma(W)) \cong \mathbb{C}[y_1,y_2,\dots,y_{N}],
\]
where $N = \dim \mathcal{M}_{\textnormal{Sp}}(2m,\alpha,L) = m(2m+1)(g-1)+m^2n$. Hence, there is an isomorphism $\beta : \mathbb{A}^{N} \to W$ such that the following diagram commutes
\begin{equation}\label{diag1} 
\begin{tikzcd}
Y \arrow[r, "\sim", "\alpha"'] \arrow[swap]{d}{m} & \mathcal{N}_{\textnormal{Sp}}(2m,\alpha,L) \arrow{d}{h} \\%
\mathbb{A}^{N} \arrow[r,"\sim", "\beta"']& W
\end{tikzcd}
\end{equation}
Consider a $\mathbb{C}^*$-action $g : \mathbb{C}^* \times \mathbb{A}^{N} \to \mathbb{A}^{N}$ with exactly one fixed point $v$ and admitting a lift to $Y$. Such an action exists because of the standard $\mathbb{C}^*$-action on $W$ and the isomorphism $\beta$.\\
By Proposition \ref{prop10}, $\beta(v)$ is the origin of $W$. Therefore, the fiber $m^{-1}(v)$ is isomorphic to the nilpotent cone. By Proposition \ref{prop8}, the moduli space $\mathcal{M}_{\textnormal{Sp}}(2m,\alpha,L)$ of stable symplectic parabolic bundles is the unique irreducible component of the nilpotent cone which doesn't admit a nontrivial $\mathbb{C}^*$-action. Therefore, if $\mathcal{N}_{\textnormal{Sp}}(2m,\alpha,L)$ is isomorphic to $\mathcal{N}'_{\textnormal{Sp}}(2m,\alpha,L)$, then $\mathcal{M}_{\textnormal{Sp}}(2m,\alpha,L)$ is isomorphic to $\mathcal{M}'_{\textnormal{Sp}}(2m,\alpha,L)$, and by Theorem \ref{thm1}, $(X,D)$ is isomorphic to $(X',D')$. 
\end{proof}

\end{document}